\theoremstyle{definition}
\newtheorem{lemma}{Lemma}[section]
\newtheorem{theorem}[lemma]{Theorem}
\newtheorem{corollary}[lemma]{Corollary}
\newtheorem{remark}{Remark}
\numberwithin{equation}{section}
\DeclareFixedFont{\Acknowledgment}{OT1}{cmr}{bx}{n}{14pt}
\begin{document}

\title{\bf Gradient flow of the norm squared of a moment map over K$\ddot{\textmd{a}}$hler manifolds}
\author{Aijin Lin and Liangming Shen}
\date{}
\maketitle

\begin{abstract}
Inspired by Wilkin's work \cite{VW, W} on Morse theory for the moduli space of Higgs bundles, we study the moduli space of gauged holomorphic maps by a heat flow approach in the spirit of Atiyah and Bott in a series of papers. In this paper, applying the method of Hong \cite{H}, we establish the global existence of smooth solutions of the gradient flow equations of the vortex functional over a compact K$\ddot{\textmd{a}}$hler manifold.
\end{abstract}
\vspace{12pt}
\let\thefootnote\relax \footnotetext {The first author is supported by the National Natural Science Foundation of China
(Grant No. 11401578). The second author is supported by the National Science Foundation under Grant No. DMS-1440140. }


\section{Introduction}
The Yang-Mills heat flow-the $L^{2}$-gradient flow for the Yang-Mills functional
 $$ A\mapsto ||F_{A}||_{L^{2}}$$
 was first introduced by Atiyah and Bott \cite{AB}, where $A$ is a connection on a principal bundle over a Riemann surface. They used the Yang-Mills heat flow to study Morse theory of the Yang-Mills functional over Riemann surfaces and proved that there was a Morse stratification coming from the Yang-Mills heat flow coincides with a holomorphic stratification coming from the Narasimhan-Seshadri type of the holomorphic structures on the vector bundle,  assuming that the flow has sufficiently good analytical properties. Donaldson \cite{D} used the Yang-Mills heat flow to give a new proof of and generalize a theorem by Narasimhan and Seshadri on stable holomorphic bundles. Daskalopoulos  \cite{Da} and Rade \cite{R} proved that holomorphic and Morse stratifications agree by different approaches. Similar Morse theory for the space of holomorphic vector bundles equipped with some extra data has been studied. For example, Wilkin \cite{W} studied the space of Higgs pairs $(A, \phi)$ by using the gradient flow of the Yang-Mills type functional
  $$(A, \phi)\mapsto ||F_{A}+[\phi, \phi^{*}]||_{L^{2}},$$
where $A$ is a connection on a principal bundle over a Riemann surface, $\phi$ is a $E$-valued $(0, 1)$ form such that $\bar{\partial}_{A}\phi=0$.  Wilkin \cite{DWWW} used Morse theoretic techniques to compute the equivariant Betti numbers of the space of semistable rank two degree zero Higgs bundles over a compact Riemann surface in the spirit of Atiyah and Bott's original approach. Recently, Venugopalan \cite{V} studied the gradient flow of the vortex functional
$$(A, u)\mapsto ||F_{A}+\Phi(u)||_{L^{2}},$$
 where $A$ is a connection on a principle bundle over a compact Riemann surface and $u$ is a holomorphic section of the associated fiber bundle whose fiber is a K$\ddot{\textmd{a}}$hler manifold with a Hamiltonian group action. Venugopalan \cite{VW} used the gradient flow of the vortex functional to study the classification of affine vortices.\par
 Let $X$ be a compact K$\ddot{\textmd{a}}$hler manifold. Fix a K$\ddot{\textmd{a}}$hler metric on $X$ and let $\omega$  be the associated K$\ddot{\textmd{a}}$hler form. Let $E$ be a rank $n$ holomorphic vector bundle over $X$. We will consider the vector bundle $E$ to be endowed with a fixed K$\ddot{\textmd{a}}$hler metric $H_{0}$. Let $\Omega^{0}(X, E)$ denote the smooth sections of $E$ and let $\mathcal{A}$ denote the space of all connections on $E$ that are unitary with respect to $H_{0}$. Let $\mathcal{G}=\textmd{Aut}(E)$ denote the gauge group, which acts on $\mathcal{A}\times \Omega^{0}(X, E)$.
We will also need to consider the spaces $\Omega^{p, q}(X, E)$ and $\Omega^{p, q}(X, \textmd{End} E)$, i.e. the spaces of forms of holomorphic type $(p, q)$ with values in $E$ and in the endomorphism bundle of $E$ respectively. Using the metric on $E$ we get identifications $E\approx E^{*}$ and also $E\otimes E^{*}\approx \textmd{End} E$.
 Define the $\textit{Yang-Mills-Higgs functional}$
$$\mathcal{YMH}: \mathcal{A}\times \Omega^{0}(X, E)\rightarrow \mathbb{R}$$
by
\begin{equation} \label{YMH}
\mathcal{YMH}(A, \phi)=||F_{A}||_{L^{2}}^{2}+||d_{A}\phi||_{L^{2}}^{2}+\frac{1}{4}||\phi\otimes\phi^{*}-\tau I||_{L^{2}}^{2}.
\end{equation}
Here $F_{A}\in \Omega^{2}(X, \textmd{End} E)$ is the curvature of the connection $A$, and $d_{A}\phi\in \Omega^{1}(X, E)$ is the covariant derivative of the section $\phi$, $I\in \Omega^{0}(X, \textmd{End} E)\approx \Omega^{2}(X, E\otimes E^{*})$ is the identity section and $\tau$ is a real parameter. The adjoint $\phi^{*}$ of $\phi$ is taken with respect to $H_{0}$.\par
The Yang-Mills-Higgs functional \ref{YMH}, which generalizes the Yang-Mills functional, appeared in the theory of superconductivity. Especially, $\mathcal{YMH}(A, \phi)$ measures the thermodynamic free energy and the physical configurations are (symplectic) vortices which minimize the functional. It attracted a lot of interests \cite{CGS, JT, I1, I2, IT, SW, T, Tr, Y} among physicists and mathematicians during the past decades. \par
Let $d_{A}: \Omega^{0}(X, E)\rightarrow \Omega^{1}(X, E)$  be the covariant derivative with respect to the connection $A$. We can split
$$d_{A}=\partial_{A}+\bar{\partial}_{A}$$
due to the splitting $$\Omega^{1}(X, E)=\Omega^{0, 1}(X, E)\oplus \Omega^{1, 0}(X, E)$$
coming from the complex structure $J$ on $X$. Now we can rewrite the Yang-Mills-Higgs functional $\mathcal{YMH}$ \ref{YMH} by the following energy identity in \cite{B}.
 \begin{theorem}\label{EN}
 Let $c_{1}(E)\in H^{2}(X, \mathbb{R})$ and $ch_{2}(E)\in H^{4}(X, \mathbb{R})$ be the first Chern class and second Chern character of $E$ respectively. Then the Yang-Mills-Higgs functional $$\mathcal{YMH}: \mathcal{A}\times \Omega^{0}(X, E)\rightarrow \mathbb{R}$$
can be written as
  \begin{eqnarray*}
\mathcal{YMH}(A, \phi)= \|\Lambda F_{A}-\frac{i}{2}(\phi\otimes\phi^{*}-\tau I)\|_{L^{2}}^{2}+4\|F_{A}^{0,2}\|_{L^{2}}^{2}\\
+2\|\bar{\partial}_{A}\phi\|+2\pi\tau C_{1}(E, \omega)-8\pi^{2}Ch_{2}(E, \omega),
 \end{eqnarray*}
 where $2\pi\tau C_{1}(E, \omega)-8\pi^{2}Ch_{2}(E, \omega)$ is a topological invariant,  $\Lambda F_{A}=(F_{A}, \omega)$, $( , \omega)$ denotes pointwise inner product with the K$\ddot{\textmd{a}}$hler form $\omega$, and  $$C_{1}(E, \omega)=:\int_{X}c_{1}(E)\wedge\omega^{[n-1]}, \quad  Ch_{2}(E, \omega)=:\int_{X}ch_{2}(E)\wedge\omega^{[n-2]}.$$
 \end{theorem}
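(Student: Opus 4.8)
The plan is to prove this energy identity by the standard Kähler-identity approach: compute the $L^2$-norms in \eqref{YMH} by decomposing the various geometric objects according to the bidegree induced by the complex structure $J$, and then recognize certain combinations as representatives of characteristic classes that integrate to topological invariants. The key tools are the Kähler identities relating $\Lambda$, $\partial_A$, $\bar\partial_A$ and their adjoints, together with Chern-Weil theory expressing $\int_X c_1(E)\wedge\omega^{[n-1]}$ and $\int_X ch_2(E)\wedge\omega^{[n-2]}$ in terms of the curvature $F_A$.

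**First I would** expand each term on the right-hand side. For the first term I would write $\|\Lambda F_A - \tfrac{i}{2}(\phi\otimes\phi^*-\tau I)\|_{L^2}^2$ as $\|\Lambda F_A\|_{L^2}^2 - i\,\mathrm{Re}\langle \Lambda F_A,\,\phi\otimes\phi^*-\tau I\rangle_{L^2} + \tfrac14\|\phi\otimes\phi^*-\tau I\|_{L^2}^2$. The third summand here is exactly the last term of \eqref{YMH}, so that piece matches immediately. The curvature terms $\|\Lambda F_A\|_{L^2}^2$ and $4\|F_A^{0,2}\|_{L^2}^2$ must combine with the topological contribution to reproduce $\|F_A\|_{L^2}^2$; this is where the Kähler identity $\|F_A\|_{L^2}^2 = \|\Lambda F_A\|_{L^2}^2 + \|F_A^{0,2}\|^2_{L^2} + \|F_A^{2,0}\|_{L^2}^2 + (\text{traces})$ enters, and where the second Chern character term $-8\pi^2 Ch_2(E,\omega)$ and the factor $2\pi\tau C_1(E,\omega)$ arise as the integrals of the relevant $(n,n)$-forms built from $F_A\wedge F_A$ and $F_A\wedge\omega^{[n-1]}$.

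**Next I would** handle the $\|d_A\phi\|_{L^2}^2$ term by splitting $d_A\phi=\partial_A\phi+\bar\partial_A\phi$ and applying the Bochner--Kodaira--Weitzenböck-type identity for sections of $E$ over a Kähler manifold. On a Kähler manifold the Laplacians satisfy $\Delta_{\partial_A}=\Delta_{\bar\partial_A}+(\text{curvature terms})$, so integration by parts converts $\|\partial_A\phi\|_{L^2}^2$ into $\|\bar\partial_A\phi\|_{L^2}^2$ plus a zeroth-order term that couples $\phi$ to $\Lambda F_A$ via $\langle i\Lambda F_A\,\phi,\phi\rangle$. This produces the factor $2\|\bar\partial_A\phi\|^2$ together with the cross term that exactly cancels the $-i\,\mathrm{Re}\langle \Lambda F_A,\phi\otimes\phi^*\rangle_{L^2}$ contribution generated in the previous step, while the $\tau I$ part of that cross term folds into the $2\pi\tau C_1(E,\omega)$ Chern-class integral.

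**The main obstacle** I anticipate is the careful bookkeeping of constants and signs: the precise normalization of the $\Lambda$ operator, the factors of $i$ and $\tfrac12$ in the moment-map term $\tfrac{i}{2}(\phi\otimes\phi^*-\tau I)$, and the numerical coefficients $2\pi$ and $8\pi^2$ in the Chern-Weil representatives all have to be tracked consistently so that the curvature cross-terms cancel and the remaining pieces assemble into the stated topological invariant. The conceptual content is routine once the Kähler identities and Chern-Weil formulas are fixed, but verifying that $\|F_A\|_{L^2}^2+\|d_A\phi\|_{L^2}^2$ reorganizes precisely into $\|\Lambda F_A-\tfrac{i}{2}(\phi\otimes\phi^*-\tau I)\|_{L^2}^2+4\|F_A^{0,2}\|_{L^2}^2+2\|\bar\partial_A\phi\|^2$ modulo topology demands that every constant be matched exactly, and this is the step I would check most carefully.
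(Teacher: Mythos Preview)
The paper does not actually give a proof of this theorem: it states the identity and attributes it to Bradlow \cite{B} (``the following energy identity in \cite{B}''), using it as background input. Your sketch is the standard route to such Bogomolny-type identities and is in essence what Bradlow does: decompose $F_A$ by bidegree and use the pointwise algebraic identity $|F_A|^2=|\Lambda F_A|^2+2|F_A^{0,2}|^2+2|F_A^{2,0}|^2-(\text{topological density})$ to account for $\|F_A\|_{L^2}^2$, then use the K\"ahler identity $\partial_A^*\partial_A-\bar\partial_A^*\bar\partial_A=i\Lambda F_A$ on $\Omega^0(X,E)$ (which the paper records as Lemma~\ref{lem4}) to convert $\|d_A\phi\|^2=\|\partial_A\phi\|^2+\|\bar\partial_A\phi\|^2$ into $2\|\bar\partial_A\phi\|^2+\langle i\Lambda F_A\,\phi,\phi\rangle$, and finally identify the remaining integrals with $C_1(E,\omega)$ and $Ch_2(E,\omega)$ via Chern--Weil. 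Since there is no in-paper proof to compare against, your plan is correct and matches the cited source; the only substantive caution is exactly the one you flag---matching the normalizations so that the cross term $\langle i\Lambda F_A\,\phi,\phi\rangle$ cancels against the expansion of the moment-map square and the $\tau$-piece produces the $2\pi\tau C_1(E,\omega)$ contribution.
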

The energy identity above immediately implies that the functional $\mathcal{YMH}$ attains its lower bound $2\pi\tau C_{1}(E, \omega)-8\pi^{2}Ch_{2}(E, \omega)$ if and only if $(A, \phi)\in  \mathcal{A}\times \Omega^{0}(X, E)$ satisfies the following (symplectic) $\textit{vortex equations}$
\begin{eqnarray}\label{vortex}
\begin{split}
F_{A}^{0, 2}=0, \quad
\bar{\partial}_{A}\phi=0,\\
\Lambda F_{A}-\frac{i}{2}(\phi\otimes\phi^{*}-\tau I)&=0.
\end{split}
\end{eqnarray}

In order to find the absolute minima characterized by the vortex equations \ref{vortex},  first define the space $\mathcal{U}_{\tau}$ of $\tau$-$Vortex$ pairs on $E$ as the space of solutions of the vortex equations above, i.e.
\begin{equation}
\mathcal{U}_{\tau}=\{(A, \phi)\in\mathcal{A}\times \Omega^{0}(X, E)|\mathcal{YMH}(A, \phi)=2\pi\tau C_{1}(E, \omega)-8\pi^{2}Ch_{2}(E, \omega)\}.
\end{equation}
Consider only integrable unitary connections, i.e. those which belong to $\mathcal{A}^{1,1}$, where
\begin{equation}
\mathcal{A}^{1,1}=\{A\in\mathcal{A}^{1,1}|F_{A}^{2,0}=F_{A}^{0,2}=0\}.
\end{equation}
Then we look for minimizing pairs $(A, \phi)$ in $\mathcal{H}\subset \mathcal{A}^{1, 1}\times \Omega^{0}(X, E)$, where
\begin{equation}
\mathcal{H}=\{(A, \phi) \in \mathcal{A}^{1, 1}\times \Omega^{0}(X, E)|\bar{\partial}_{A}\phi=0\},
\end{equation}
and any a pair $(A, \phi)\in \mathcal{H}$ is called a $\textit{gauged holomorphic map }$.
\begin{remark}\label{rem1}
 Similar definition can be found in \cite{V}, where the base manifold is a compact Riemann surface and the fiber of the associated fiber bundle is a compact K$\ddot{\textmd{a}}$hler manifold with a Hamiltonian group action. Thus our definition of the gauged holomorphic map can be regarded as a generalization of Venugopalan's when the base manifold is a compact K$\ddot{\textmd{a}}$hler manifold and the fiber of the associated fiber bundle is $\mathbb{C}^{n}$.
\end{remark}
It is well-known that the space $\mathcal{H}$ is a symplectic submanifold of $\mathcal{A}\times \Omega^{0}(X, E)$ with a Hamiltonian action of the group of gauge transformations $\mathcal{G}$. The moment map $\Psi_{\tau}: \mathcal{H}\rightarrow \mathfrak{g}$ is given by (see \cite{BD})
\begin{equation}\label{moment}
\Psi_{\tau}(A, \phi)=\Lambda F_{A}-\frac{i}{2}(\phi\otimes\phi^{*}-\tau I).
\end{equation}
 Now we define the $\textit{vortex functional}$ (see section 2 for details) on the space of gauged holomorphic maps
$$\mathcal{F}: \mathcal{H}\rightarrow \mathbb{R}$$
by
 \begin{equation}\label{vort}
 (A, \phi)\mapsto \mathcal{F}(A, \phi)=\|\Lambda F_{A}-\frac{i}{2}(\phi\otimes\phi^{*}-\tau I)\|_{L^{2}}^{2},
 \end{equation}
 which implies that the vortex functional is precisely the square of the norm of the moment map \ref{moment}.\par
  By discussion above, we have
  $$\mathcal{U}_{\tau}=\{(A, \phi)\in \mathcal{H}|\Lambda F_{A}-\frac{i}{2}(\phi\otimes\phi^{*}-\tau I)=0\}.$$
Note that the Yang-Mills-Higgs functional $\mathcal{YMH}$ is invariant under the action of the unitary gauge group $\mathcal{G}$. It means that $\mathcal{U}_{\tau}$ is $\mathcal{G}$-invariant set and that $\tau$-vortices can be defined by equivalence classes in $\mathcal{B}=(\mathcal{A}\times \Omega^{0}(X, E))/\mathcal{G}$. Define the space of gauge equivalence classes of $\tau$-$Vortex$ pairs by
\begin{equation}
\mathcal{V}_{\tau}=\{[A, \phi]\in\mathcal{B}|\mathcal{YMH}(A, \phi)=2\pi\tau C_{1}(E, \omega)-8\pi^{2}Ch_{2}(E, \omega)\}.
\end{equation}
Bradlow has studied the moduli space $\mathcal{V}_{\tau}$ systematically. In \cite{B}, by applying the famous results of Kazdan and Warner \cite{KW}, Bradlow obtained a complete description of the moduli space $\mathcal{V}_{\tau}$ in terms of a class of divisors in the base manifold $X$ when $X$ is closed K$\ddot{\textmd{a}}$hler manifold and the rank of the vector bundle $E$ is one. When the base manifold $X$ is a closed Riemann surface, Bradlow \cite{BD} proved that the moduli space $\mathcal{V}_{\tau}$ is a finite dimensional Hausdorff compact K$\ddot{\textmd{a}}$hler manifold under suitable restrictions on $\tau$. Further, he \cite{BD2} proved that the moduli space $\mathcal{V}_{\tau}$ can admit the structure of a nonsingular projective variety and obtained some topological information on $\mathcal{V}_{\tau}$. \par
 Inspired by Wilkin's work \cite{VW, W} on Morse theory for the space of Higgs bundles, we will study the moduli space $\mathcal{V}_{\tau}$ systematically by the following gradient flow equations of the vortex functional \ref{vort} in the spirit of Atiyah and Bott in a series of papers.
\begin{eqnarray}\label{f1}
\frac{\partial A}{\partial t}=-*d_{A}F_{A, \phi},  \quad
\frac{\partial \phi}{\partial t}=-JL_{\phi}F_{A, \phi},
\end{eqnarray}
where $L_{x}: \mathfrak{g} \rightarrow T_{x}X$ denotes the infinitesimal action of $\mathfrak{g}$ on $X$, $\mathfrak{g}$ is the Lie algebra of the gauge group $\mathcal{G}$, $J$ is the fixed complex structure of the K$\ddot{\textmd{a}}$hler manifold $X$, and $F_{A, \phi}=\Lambda F_{A}-\frac{i}{2}(\phi\otimes\phi^{*}-\tau I)$.\par
In this paper we mainly focus on the global existence of smooth solutions of the gradient flow equations \ref{f1}. In next papers \cite{L1, L2}, we will study the convergence of the gradient flow equations \ref{f1} and start to construct the equivariant Morse theory of the vortex functional \ref{vort} in the spirit of Atiyah and Bott.\par
 Similar equations compared to \ref{f1} over a compact Riemann surface have been studied, for example in \cite{V, W}. However, the base manifold here is a compact K$\ddot{\textmd{a}}$hler manifold so we need to apply different approach. The key step comes from the energy identity (see section 2) which associates the vortex functional \ref{vort} with the Yang-Mills-Higgs functional \ref{YMH}. In fact, we can show that the vortex functional is equivalent to the Yang-Mills-Higgs functional in the space $\mathcal{H}$ of gauged holomorphic maps by the energy identity. Therefore, the gradient flow equations \ref{f1} is equivalent to the gradient flow equations of the Yang-Mills-Higgs functional
  \begin{equation}\label{f2}
\begin{split}
\frac{\partial A}{\partial t}&=-d^{*}_{A}F_{A}-J_{A,\phi}; \\
\frac{\partial \phi}{\partial t}&=-d^{*}_{A}d_{A}\phi+\frac{1}{2}\phi(\tau-|\phi|^{2}).
\end{split}
\end{equation}
Now we state our main results as follows.\par
First by applying and generalizing Hong's approach \cite{H}, we show that there exists a global smooth solutions of the gradient flow equations \ref{f2}. In fact, we have
 \begin{theorem}\label{Com}
Then the gradient flow equations \ref{f2}
 \begin{equation*}
\begin{split}
\frac{\partial A}{\partial t}&=-d^{*}_{A}F_{A}-J_{A,\phi}; \\
\frac{\partial \phi}{\partial t}&=-d^{*}_{A}d_{A}\phi+\frac{1}{2}\phi(\tau-|\phi|^{2}).
\end{split}
\end{equation*}
of the Yang-Mills-Higgs functional over a compact K$\ddot{\textmd{a}}$hler manifold with initial conditions $(A_{0}, \phi_{0})\in \mathcal{H}$, where $J_{A,\phi}=\frac{1}{2}(d_{A}\phi\otimes\phi-\phi\otimes(d_{A}\phi)^{*})$,  have a unique smooth solution which exists for all time and depends continuously on the initial conditions.
  \end{theorem}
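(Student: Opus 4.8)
The plan is to follow the standard parabolic strategy for geometric heat flows, adapted to the coupled system \ref{f2} in the spirit of Hong \cite{H}. The overall structure is: (i) short-time existence, uniqueness and continuous dependence via gauge fixing and a contraction argument; (ii) a priori estimates that propagate regularity and bound all derivatives; and (iii) a continuation argument upgrading short-time to long-time existence.

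First I would address short-time existence. The connection equation $\partial_t A = -d_A^* F_A - J_{A,\phi}$ is only weakly parabolic: its principal symbol annihilates the image of the infinitesimal gauge action, reflecting the $\mathcal{G}$-invariance of $\mathcal{YMH}$. To restore strict parabolicity I would apply DeTurck's trick, adding the gauge-fixing term $d_A d_A^*(A - A_0)$ (equivalently, working in a Coulomb-type gauge relative to $A_0$). The modified system then has diagonal principal part equal to the rough Laplacian $d_A^* d_A$ acting on $A$ and $\phi$ simultaneously, so it is a genuine nonlinear parabolic system. Short-time existence, uniqueness, and smoothness follow from standard parabolic theory on the compact manifold $X$ (a fixed point in parabolic H\"older or $L^2$ Sobolev spaces), and the Lipschitz dependence of the fixed-point map on the data yields continuous dependence on $(A_0,\phi_0)$. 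Undoing the DeTurck gauge transformation, by solving the associated heat equation for the time-dependent gauge and verifying its smoothness, recovers a solution of the original system \ref{f2}.

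Next I would establish the a priori estimates needed for long-time existence. Since \ref{f2} is the negative gradient flow of $\mathcal{YMH}$, the energy identity of Theorem \ref{EN} shows $\mathcal{YMH}(A(t),\phi(t))$ is nonincreasing, giving a uniform $L^2$ bound on $F_A$, $d_A\phi$ and $\phi\otimes\phi^* - \tau I$ along the flow. The reaction term $\frac12\phi(\tau - |\phi|^2)$ is crucial: a Weitzenb\"ock computation for $\partial_t |\phi|^2$ yields a differential inequality $(\partial_t - \Delta)|\phi|^2 \le |\phi|^2(\tau - |\phi|^2) + (\text{lower order})$, so the maximum principle produces a pointwise bound $\sup_X |\phi(t)| \le C(\tau, \|\phi_0\|_{\infty})$ uniform in $t$. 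I would then derive Bochner--Weitzenb\"ock inequalities for $|F_A|^2$ and $|d_A\phi|^2$; on the K\"ahler manifold $X$ one first checks that the flow preserves $\mathcal{H}$, i.e. the integrability condition $F_A^{0,2} = 0$ and the holomorphicity $\bar\partial_A\phi = 0$ are maintained (both follow from a uniqueness argument for the parabolic equations they satisfy, since both quantities vanish initially). On $\mathcal{H}$ the curvature is governed by $\Lambda F_A$, which obeys its own parabolic equation, reducing curvature control to a scalar maximum-principle argument as in the Donaldson--Simpson theory of the Hermitian--Yang--Mills flow.

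The main obstacle is precisely this last curvature estimate: on a higher-dimensional base the pure Yang--Mills flow can blow up in finite time, so global existence must exploit the integrable, K\"ahler structure. The key is to use preservation of $\mathcal{H}$ to convert the flow into a Donaldson-type heat flow for the Hermitian metric, for which $\Lambda F_A$ satisfies a parabolic equation controllable by the maximum principle, together with the uniform bound on $|\phi|$ from the reaction term. Granting uniform bounds on $|F_A|$ and $|\phi|$ on any finite interval, a standard bootstrap using the parabolic equations and Schauder/$L^p$ estimates bounds every higher covariant derivative of $(A,\phi)$ uniformly on compact time intervals. Finally I would run a continuation argument: the short-time theory shows the solution extends as long as these norms remain finite, and the a priori bounds show they cannot blow up in finite time, yielding a smooth solution for all $t \in [0,\infty)$. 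Uniqueness and continuous dependence then persist globally by patching the local statements together with Gronwall-type energy comparisons between two solutions.
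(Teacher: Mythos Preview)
Your overall strategy is sound and would succeed, but the paper (following Donaldson \cite{D} and Hong \cite{H}) takes a tighter route that collapses your first two steps into one. Rather than applying a DeTurck trick to \ref{f2} and then separately proving preservation of $\mathcal{H}$, the paper \emph{starts} from the Hermitian metric flow: writing $A(t)=g(t)(A_0)$, $\phi(t)=g(t)(\phi_0)$ for $g(t)\in\mathcal{G}^{\mathbb{C}}$ with $h=g^{*}g$, it shows that \ref{f2} is, up to a further unitary gauge absorbing the skew part of $g^{-1}\partial_t g$ (the $\alpha(t)$ in \ref{At}--\ref{St}), equivalent to the scalar parabolic equation \ref{h2} for $h$. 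This buys two things at once: \ref{h2} is already strictly parabolic, so no DeTurck device is needed for local existence; and because $(A(t),\phi(t))$ sits in the complex gauge orbit of $(A_0,\phi_0)\in\mathcal{H}$, preservation of $F_A^{0,2}=0$ and $\bar\partial_A\phi=0$ is automatic (Lemma \ref{lem1}) rather than a separate parabolic uniqueness argument.

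For the global step the two approaches converge: both rest on the maximum-principle bound for $|\phi|$ (your Weitzenb\"ock computation is Lemma \ref{lem5}) and on the key subsolution inequality for $\hat e=|\Lambda F_A-\tfrac{i}{2}(\phi\otimes\phi^{*}-\tau I)|^{2}$, namely $(\partial_t+\Delta)\hat e\le 0$ (Lemma \ref{equ}), which gives a uniform sup bound on $|\Lambda F_A|$. One imprecision in your outline is worth flagging: the scalar maximum principle controls $|\Lambda F_A|$, not $|F_A|$; on a higher-dimensional K\"ahler base the full curvature is \emph{not} determined by its trace, and there is no direct Bochner inequality giving $\sup|F_A|$. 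Control of the full curvature and higher derivatives comes only \emph{after} passing to the $h$-equation, via $C^{0}\to C^{1,\alpha}\to C^{2,\alpha}$ estimates for $H(t)$ as in Lemmas \ref{lem8}--\ref{lem9} and Theorem \ref{Global}. So your DeTurck route for local existence is correct but ultimately redundant: the metric-flow formulation you invoke for the global estimate already delivers local existence, preservation of $\mathcal{H}$, and the regularity bootstrap in one package.
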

Then by the equivalence (See Theorem \ref{EN}) between gradient flow equations \ref{f1} and \ref{f2}, we get
 \begin{theorem}\label{Exi}
The gradient flow equations \ref{f1}
 \begin{eqnarray*}
\frac{\partial A}{\partial t}=-*d_{A}F_{A, \phi},  \quad
\frac{\partial \phi}{\partial t}=-JL_{\phi}F_{A, \phi}.
\end{eqnarray*}
of the vortex functional \ref{vort} over a compact K$\ddot{\textmd{a}}$hler manifold  with initial conditions $(A_{0}, \phi_{0})\in \mathcal{H}$, where $F_{A, \phi}=\Lambda F_{A}-\frac{i}{2}(\phi\otimes\phi^{*}-\tau I)$, have a unique smooth solution which exists for all time and depends continuously on the initial conditions.
  \end{theorem}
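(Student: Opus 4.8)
The plan is to reduce Theorem~\ref{Exi} entirely to Theorem~\ref{Com}, so that the heavy parabolic analysis (carried out via Hong's method for the flow \ref{f2}) is reused rather than repeated. The bridge is the energy identity of Theorem~\ref{EN}. First I would restrict that identity to the space $\mathcal{H}$ of gauged holomorphic maps, where by definition $F_{A}^{0,2}=0$ and $\bar{\partial}_{A}\phi=0$. The two terms $4\|F_{A}^{0,2}\|_{L^{2}}^{2}$ and $2\|\bar{\partial}_{A}\phi\|_{L^{2}}^{2}$ then drop out, and the identity collapses to
\begin{equation*}
\mathcal{YMH}(A,\phi)=\mathcal{F}(A,\phi)+2\pi\tau C_{1}(E,\omega)-8\pi^{2}Ch_{2}(E,\omega)\qquad\text{on }\mathcal{H}.
\end{equation*}
Since the topological quantity $2\pi\tau C_{1}(E,\omega)-8\pi^{2}Ch_{2}(E,\omega)$ is constant, the Yang-Mills-Higgs functional and the vortex functional differ on $\mathcal{H}$ only by a fixed constant, and hence have the same $L^{2}$-gradient at every point of $\mathcal{H}$.

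The crux, and the step I expect to cause the most difficulty, is to show that the flow \ref{f2} genuinely preserves $\mathcal{H}$, so that this coincidence of gradients persists for all time and is not merely infinitesimal at $t=0$. For this I would differentiate the constraints along the flow and derive evolution equations for $F_{A}^{0,2}$ and for $\bar{\partial}_{A}\phi$. The expectation is that each of these quantities satisfies a linear parabolic equation whose zeroth- and first-order coefficients are built from the solution $(A(t),\phi(t))$ already controlled by Theorem~\ref{Com}; since both vanish at $t=0$ by the hypothesis $(A_{0},\phi_{0})\in\mathcal{H}$, parabolic uniqueness forces them to vanish for all $t$. The delicate point is commuting the heat-type operator of \ref{f2} past $\bar{\partial}_{A}$ and past the projection onto $(0,2)$-forms: this generates curvature contractions and lower-order terms which must be absorbed into the parabolic structure (as genuine zeroth/first order coefficients) rather than producing an inhomogeneous source that would obstruct the vanishing argument.

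Granting invariance of $\mathcal{H}$, the identification of the two flows is immediate. Invariance shows that the full-space gradient $\mathrm{grad}\,\mathcal{YMH}$, which is the right-hand side of \ref{f2}, is tangent to $\mathcal{H}$; being tangent, it equals the intrinsic gradient of the restriction $\mathcal{YMH}|_{\mathcal{H}}$ with respect to the K$\ddot{\textmd{a}}$hler metric on $\mathcal{H}$. By the collapsed energy identity this intrinsic gradient is $\mathrm{grad}_{\mathcal{H}}\mathcal{F}$, which by the moment-map computation of Section~2 is precisely the right-hand side of \ref{f1} (the standard formula expressing the gradient of the squared norm of a moment map as the complex structure $J$ applied to the fundamental vector field generated by $\Psi_{\tau}=F_{A,\phi}$, accounting for both the connection term $-*d_{A}F_{A,\phi}$ and the section term $-JL_{\phi}F_{A,\phi}$). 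Thus along $\mathcal{H}$ the systems \ref{f1} and \ref{f2} are driven by one and the same vector field.

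With the equivalence established, the conclusion transfers verbatim. Theorem~\ref{Com} supplies, for every initial datum $(A_{0},\phi_{0})\in\mathcal{H}$, a unique global smooth solution of \ref{f2} depending continuously on the initial conditions; by the invariance of $\mathcal{H}$ this curve lies in $\mathcal{H}$ for all time, and by the coincidence of the two vector fields it is exactly the solution of \ref{f1} with the same initial datum. Global existence, smoothness, uniqueness, and continuous dependence therefore pass directly from \ref{f2} to \ref{f1}, which proves Theorem~\ref{Exi}. \qed
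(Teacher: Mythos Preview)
Your overall strategy matches the paper's exactly: reduce Theorem~\ref{Exi} to Theorem~\ref{Com} via the energy identity, which collapses on $\mathcal{H}$ to $\mathcal{YMH}=\mathcal{F}+\text{(topological constant)}$, so that the two gradient vector fields coincide there. The paper packages this reduction as Theorem~\ref{equiv} and then proves Theorem~\ref{Exi} in a single sentence by invoking Theorem~\ref{equiv} and Theorem~\ref{Com}.

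Where you diverge is in how you propose to secure the invariance of $\mathcal{H}$ under the flow~\ref{f2}. You plan to differentiate the constraints $F_{A}^{0,2}$ and $\bar{\partial}_{A}\phi$ along the flow, obtain linear homogeneous parabolic equations for them, and conclude vanishing from parabolic uniqueness. That argument can be made to work, but the paper never needs it: the solution to~\ref{f2} is \emph{constructed} in Section~3 (following Donaldson and Hong) as $(A(t),\phi(t))=S(t)^{-1}\!\cdot\!\bigl(g(t)(A_{0}),\,g(t)(\phi_{0})\bigr)$, where $g(t)=h(t)^{1/2}\in\mathcal{G}^{\mathbb{C}}$ comes from the metric flow~\ref{h1} and $S(t)$ is the unitary correction~\ref{St}. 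Lemma~\ref{lem1} shows that complex gauge orbits preserve $\mathcal{H}$, and unitary gauge transformations trivially do, so $(A(t),\phi(t))\in\mathcal{H}$ for all $t$ by construction. Thus the step you flag as ``the most difficult'' is obtained for free from the very mechanism used to prove Theorem~\ref{Com}, and your proposed evolution-equation argument, while correct in spirit, is unnecessary extra work here.
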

As an application, suppose that the base manifold $X$ is one dimensional compact K$\ddot{\textmd{a}}$hler manifold, which means $X$ is a compact Riemann surface.
Note that in this case $\mathcal{A}=\mathcal{A}^{1, 1}$, and
\begin{equation*}
\mathcal{H}=\{(A, \phi) \in \mathcal{A}\times \Omega^{0}(X, E)|\bar{\partial}_{A}\phi=0\}.
\end{equation*}
Therefore we have
\begin{corollary}\label{Exi1}
The gradient flow equations
 \begin{eqnarray*}
\frac{\partial A}{\partial t}=-*d_{A}F_{A, \phi},  \quad
\frac{\partial \phi}{\partial t}=-JL_{\phi}F_{A, \phi}.
\end{eqnarray*}
of the vortex functional \ref{vort} over a compact Riemann surface with initial conditions $(A_{0}, \phi_{0})\in \mathcal{H}$, where $F_{A, \phi}=\Lambda F_{A}-\frac{i}{2}(\phi\otimes\phi^{*}-\tau I)$,  have a unique smooth solution which exists for all time and depends continuously on the initial conditions.
  \end{corollary}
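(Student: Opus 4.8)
The plan is to obtain Corollary \ref{Exi1} as an immediate specialization of Theorem \ref{Exi}, exploiting the fact that a compact Riemann surface is nothing but a compact K$\ddot{\textmd{a}}$hler manifold of complex dimension one. First I would record the dimensional simplification: when $\dim_{\mathbb{C}} X = 1$, the bundle of $(0,2)$-forms (and likewise of $(2,0)$-forms) is the zero bundle, since every differential form on a real surface has total degree at most two and the only admissible bidegrees are $(0,0)$, $(1,0)$, $(0,1)$, and $(1,1)$. Consequently $F_{A}^{0,2} = F_{A}^{2,0} = 0$ holds automatically for every unitary connection $A$, so the integrability constraint is vacuous and $\mathcal{A} = \mathcal{A}^{1,1}$. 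The space of gauged holomorphic maps therefore collapses to $\mathcal{H} = \{(A,\phi) \in \mathcal{A} \times \Omega^{0}(X,E) \mid \bar{\partial}_{A} \phi = 0\}$, exactly as stated in the excerpt preceding the corollary.

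With this identification in hand, the second step is simply to verify that the hypotheses of Theorem \ref{Exi} are met: the base $X$ is a compact K$\ddot{\textmd{a}}$hler manifold, the initial data $(A_{0},\phi_{0})$ lie in $\mathcal{H}$, and the flow equations \ref{f1} are precisely those appearing in Theorem \ref{Exi}. Invoking that theorem then yields, at one stroke, the existence of a smooth solution, its uniqueness, its global extension in $t$, and its continuous dependence on the initial conditions. No independent analytic argument is needed in the one-dimensional setting, because all of the hard work---the short-time existence via Hong's method, the a priori estimates preventing finite-time blow-up, and the passage between the vortex flow \ref{f1} and the Yang-Mills-Higgs flow \ref{f2} through the energy identity (Theorem \ref{EN})---has already been carried out for arbitrary compact K$\ddot{\textmd{a}}$hler $X$ in Theorems \ref{Com} and \ref{Exi}.

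The one point I would pause to confirm is that the flow genuinely stays within $\mathcal{H}$, i.e. that the conditions $F_{A}^{0,2} = 0$ and $\bar{\partial}_{A} \phi = 0$ are preserved along \ref{f1}; but in complex dimension one the first of these is free, and the preservation of $\bar{\partial}_{A} \phi = 0$ is already part of the content guaranteed by Theorem \ref{Exi} in the general case, hence holds a fortiori here. Thus I do not expect a genuine obstacle: the corollary follows transparently from Theorem \ref{Exi}, the sole subtlety being the (essentially trivial) observation that the automatic integrability $\mathcal{A} = \mathcal{A}^{1,1}$ places the Riemann-surface case squarely within the scope of the general theorem.
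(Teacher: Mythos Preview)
Your proposal is correct and follows essentially the same approach as the paper: the corollary is obtained as an immediate specialization of Theorem \ref{Exi}, using only the observation that on a compact Riemann surface one has $\mathcal{A}=\mathcal{A}^{1,1}$ automatically, so that $\mathcal{H}=\{(A,\phi)\in\mathcal{A}\times\Omega^{0}(X,E)\mid\bar{\partial}_{A}\phi=0\}$. The paper gives no separate proof beyond this remark preceding the corollary, so your write-up is, if anything, slightly more detailed than the original.
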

This corollary can be regarded as a special case of Theorem 1.5 in \cite{V}, when the fiber of the associated fiber bundles is a special symplectic vector space, i.e. $\mathbb{C}^{n}$. \par
This paper is organized as follows. Section 2 gives some preliminary analysis used in the rest of the paper. In section 3 we prove the local existence of smooth solutions of the gradient flow equations \ref{f2}. We complete the proofs of Theorem \ref{Com} and Theorem \ref{Exi} in section 4.

\noindent \textbf{Acknowledgements:} Both authors would like to thank Professor Gang Tian for constant guidance and encouragement. The first author also thanks Professor Huijun Fan for constant guidance and support. The first author
is supported by the National Natural Science Foundation of China
under Grant No. 11401578. The second author is supported by the National Science Foundation under Grant No. DMS-1440140.

\section{Preliminary analysis}
The purpose of this section is to give some preliminary analysis used in the rest of the paper. First observe that if we constran the the Yang-Mills-Higgs functional $\mathcal{YMH}$ on the space $\mathcal{H}$ of gauged holomorphic maps, then by Theorem \ref{EN} we have
\begin{eqnarray}
\begin{split}\label{identity}
\mathcal{YMH}(A, \phi)&=\mathcal{F}(A, \phi)+2\pi\tau C_{1}(E, \omega)-8\pi^{2}Ch_{2}(E, \omega)\}\\
&=\|\Lambda F_{A}-\frac{i}{2}(\phi\otimes\phi^{*}-\tau I)\|_{L^{2}}^{2}+2\pi\tau C_{1}(E, \omega)-8\pi^{2}Ch_{2}(E, \omega).
\end{split}
\end{eqnarray}
Recall that $C=2\pi\tau C_{1}(E, \omega)-8\pi^{2}Ch_{2}(E, \omega)$ is a topological constant, therefore we obtain
\begin{theorem}\label{equiv}
In the space $\mathcal{H}$ of gauged holomorphic maps, the gradient flow equations \ref{f1}
\begin{eqnarray*}
\frac{\partial A}{\partial t}=-*d_{A}F_{A, \phi},  \quad
\frac{\partial \phi}{\partial t}=-JL_{\phi}F_{A, \phi}.
\end{eqnarray*}
of the vortex functional \ref{vort} are equivalent to the gradient flow equations \ref{f2}
 \begin{equation}\label{heat2}
\begin{split}
\frac{\partial A}{\partial t}&=-d^{*}_{A}F_{A}-J_{A,\phi}; \\
\frac{\partial \phi}{\partial t}&=-d^{*}_{A}d_{A}\phi+\frac{1}{2}\phi(\tau-|\phi|^{2}).
\end{split}
\end{equation}
of the Yang-Mills-Higgs functional \ref{YMH}.
\end{theorem}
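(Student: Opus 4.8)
The plan is to obtain the equivalence as an immediate consequence of the energy identity of Theorem \ref{EN}, which is precisely what links the two functionals. Written in full over the whole configuration space $\mathcal{A}\times\Omega^{0}(X,E)$, that identity reads
$$\mathcal{YMH}(A,\phi)=\mathcal{F}(A,\phi)+4\|F_{A}^{0,2}\|_{L^{2}}^{2}+2\|\bar{\partial}_{A}\phi\|_{L^{2}}^{2}+C,$$
with $C=2\pi\tau C_{1}(E,\omega)-8\pi^{2}Ch_{2}(E,\omega)$ constant. I would introduce the nonnegative defect functional $G(A,\phi)=4\|F_{A}^{0,2}\|_{L^{2}}^{2}+2\|\bar{\partial}_{A}\phi\|_{L^{2}}^{2}$, so that $\mathcal{YMH}=\mathcal{F}+G+C$ everywhere, and note that $G\geq0$ with $G=0$ exactly on $\mathcal{H}$.

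The crux is to show that $\mathrm{grad}\,G$ vanishes along $\mathcal{H}$. This is immediate from the structure of $G$: since $G\geq0$ and $G$ attains its minimum value $0$ at every gauged holomorphic map, each point of $\mathcal{H}$ is an absolute minimizer of $G$, so its first variation annihilates all directions and $\mathrm{grad}\,G|_{\mathcal{H}}=0$. (The same conclusion can be reached by direct computation, since the first variations of $\|F_{A}^{0,2}\|_{L^{2}}^{2}$ and $\|\bar{\partial}_{A}\phi\|_{L^{2}}^{2}$ are proportional to $F_{A}^{0,2}$ and $\bar{\partial}_{A}\phi$, both of which vanish on $\mathcal{H}$.) Differentiating $\mathcal{YMH}=\mathcal{F}+G+C$, using that $C$ is constant and that $\mathrm{grad}\,G=0$ on $\mathcal{H}$, I then get $\mathrm{grad}\,\mathcal{YMH}=\mathrm{grad}\,\mathcal{F}$ at every point of $\mathcal{H}$, where both are the $L^{2}$-gradients on $\mathcal{A}\times\Omega^{0}(X,E)$.

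Since the flow \ref{f1} is the negative $L^{2}$-gradient flow of $\mathcal{F}$ and the flow \ref{f2} is the negative $L^{2}$-gradient flow of $\mathcal{YMH}$, this equality of gradients says precisely that the right-hand sides of \ref{f1} and \ref{f2} coincide at every $(A,\phi)\in\mathcal{H}$. Hence the two systems are the same equation along $\mathcal{H}$, and a curve lying in $\mathcal{H}$ solves \ref{f1} if and only if it solves \ref{f2}, which is the asserted equivalence. (That a solution issuing from initial data in $\mathcal{H}$ remains in $\mathcal{H}$ is consistent with this picture and is taken up together with the existence statement of Theorem \ref{Com}.)

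The conceptual step above is short, so I expect the genuine work to lie in the explicit variational bookkeeping. Checking that the moment-map form $(-*d_{A}F_{A,\phi},\,-JL_{\phi}F_{A,\phi})$ of \ref{f1} and the form $(-d_{A}^{*}F_{A}-J_{A,\phi},\,-d_{A}^{*}d_{A}\phi+\frac{1}{2}\phi(\tau-|\phi|^{2}))$ of \ref{f2} really are the $L^{2}$-gradients of $\mathcal{F}$ and $\mathcal{YMH}$, and reconciling them on $\mathcal{H}$, relies on the K$\ddot{\textmd{a}}$hler identities and the Bianchi identity; these are exactly the tools that make $\mathrm{grad}\,G$ vanish on $\mathcal{H}$ and that rewrite the $\Lambda F_{A}$-terms of the moment map as the $d_{A}^{*}$-terms of the Yang-Mills-Higgs gradient. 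Carrying out this computation in the appropriate Sobolev completion is the main obstacle, whereas the passage from ``two functionals agreeing up to a constant on $\mathcal{H}$'' to ``one and the same gradient flow on $\mathcal{H}$'' is then automatic.
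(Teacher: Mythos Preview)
Your proposal is correct and follows essentially the same route as the paper: both rest on the energy identity of Theorem~\ref{EN} to conclude that $\mathcal{YMH}$ and $\mathcal{F}$ differ by a constant on $\mathcal{H}$, hence have the same $L^{2}$-gradient there. Your treatment is slightly more careful in explicitly arguing that the defect term $G=4\|F_{A}^{0,2}\|_{L^{2}}^{2}+2\|\bar{\partial}_{A}\phi\|_{L^{2}}^{2}$ has vanishing ambient gradient along $\mathcal{H}$ (via the global-minimum argument), a point the paper's short proof leaves implicit by working directly with the restricted identity~\ref{identity}.
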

\begin{proof}
The Euler-Lagrange equations of the vortex functional can be computed as follows
$$\nabla\mathcal{F}(A, \phi)=(*d_{A}F_{A, \phi},\quad JL_{\phi}F_{A, \phi})=0.$$
We can also compute the the Euler-Lagrange equations of the Yang-Mills-Higgs functional
as
$$\nabla\mathcal{YMH}(A, \phi)=(d^{*}_{A}F_{A}+J_{A,\phi},\quad d^{*}_{A}d_{A}\phi-\frac{1}{2}\phi(\tau-|\phi|^{2})=0.$$
By the energy identity \ref{identity} in $\mathcal{H}$, note that $C=2\pi\tau C_{1}(E, \omega)-8\pi^{2}Ch_{2}(E, \omega)$ is a topological constant independent of $A, \phi$,
thus the Euler-Lagrange equations above agree.  $\hfill\Box$
\end{proof}
Next we set some notations and do some computations. For the K$\ddot{\textmd{a}}$hler manifold $X$ with K$\ddot{\textmd{a}}$hler form $\omega$ we define
\begin{eqnarray*} L: \Omega(X, E)^{p,q}\rightarrow \Omega(X, E)^{p+1,q+1}, L(\eta)=\eta\wedge\omega. \end{eqnarray*}
We can define an algebraic trace operator $\Lambda$ by
\begin{eqnarray*} \Lambda=L^{*}:\Omega(X, E)^{p,q}\rightarrow \Omega(X, E)^{p-1,q-1}. \end{eqnarray*}
Then the operators ${\partial}_{A}$, ${\bar \partial}_{A}$, their $L^{2}$ adjoints and $\Lambda$ are related by the following K$\ddot{\textmd{a}}$hler identities (See \cite{H, J} for details)
\begin{eqnarray}\label{Ka1}
{\bar \partial}^{*}_{A}=i[\partial_{A}, \Lambda],\quad
{\partial}^{*}_{A}=-i[{\bar \partial}_{A}, \Lambda]
\end{eqnarray}
on $\Omega^{p, q}(X, E)$. Especially, we have
\begin{eqnarray}\label{Ka2}
{\bar \partial}^{*}_{A}=-i\Lambda\partial_{A},\quad
{\partial}^{*}_{A}=i\Lambda{\bar \partial}_{A}
\end{eqnarray}
on $\Omega^{0, 1}(X, E), \Omega^{1, 0}(X, E)$.\par
Following Donaldson's approach \cite{D, H}, consider the complex gauge group ${\mathcal{G}}^{\mathbb{C}}$, which acts on $A\in \mathcal{A}^{1,1}$ with curvature $F_{A}$ of type $(1,1)$ by
\begin{eqnarray} \label{G1}
\bar{\partial}_{g(A)}=g\cdot \bar{\partial}_{A}\cdot g^{-1}, \quad {\partial}_{g(A)}={g^{*}}^{-1} \cdot {\partial}_{A}\cdot g^{*}
\end{eqnarray}
where $g^{*}=\bar{g}^{t}$ denotes the conjugate transpose of $g$. Extending the action of the unitary gauge group,
\begin{eqnarray*}
\mathcal{G}=\{g\in {\mathcal{G}}^{\mathbb{C}}|h(g)=g^{*}g=I\},\end{eqnarray*}
which means
\begin{eqnarray} \label{G2}
g^{-1}\cdot d_{g(A)}\cdot g=\bar{\partial}_{A}+h^{-1}{\partial}_{A}h,\quad
g^{-1}F_{g(A)}g=F_{A}+\bar{\partial}_{A}(h^{-1}{\partial}_{A}h),
\end{eqnarray}
where $h=g^{*}g$.\par

Suppose that $(A_{0}, \phi_{0})\in \mathcal{H}$, which means $A_{0}\in \mathcal{A}^{1,1}$ is a connection with curvature $F_{A_{0}}$ of type $(1, 1)$, and $\phi_{0}$ is holomorphic with respect to the connection $A_{0}$, i.e. ${\bar \partial}_{A_{0}}u_{0}=0$. Consider the flow  $A(t)=g(t)(A_{0}), \phi(t)=g(t)(\phi_{0}), g(t)\in \mathcal{G}^{\mathbb{C}}$. Then we have
following lemmas, whose proofs are very similar to Hong's proofs in \cite{H}.
\begin{lemma} \label{lem1}
 If the initial value $(A_{0}, \phi_{0})\in\mathcal{H}$, then the flow $(A(t), \phi(t))\in\mathcal{H}$ for any $t$, i.e.
 \begin{eqnarray}
{\bar \partial}_{A(t)}\phi(t)=0, \quad A(t)\in \mathcal{A}^{1,1}.
\end{eqnarray}
\end{lemma}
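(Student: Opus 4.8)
The plan is to show that the two conditions defining $\mathcal{H}$ are each preserved by the complex gauge flow $g(t) \in \mathcal{G}^{\mathbb{C}}$, using directly the transformation formulas \eqref{G1} for how the complex gauge group acts on the Dolbeault operators. The key observation is that $\mathcal{H}$ consists of pairs satisfying two \emph{holomorphic} (as opposed to metric) conditions, and that the action of the \emph{complex} gauge group is tailored precisely to respect holomorphic structure. So I expect both conditions to be preserved essentially by algebraic manipulation of the conjugation formulas, with no analysis required.

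First I would verify that $A(t) \in \mathcal{A}^{1,1}$, i.e.\ that the curvature stays of type $(1,1)$. Since $F_A^{0,2} = \bar{\partial}_A^2$ (the $(0,2)$-part of the curvature is the obstruction to $\bar{\partial}_A$ being a complex structure), and the gauge action \eqref{G1} gives $\bar{\partial}_{g(A)} = g \cdot \bar{\partial}_A \cdot g^{-1}$, I would compute
\begin{equation*}
\bar{\partial}_{g(A)}^2 = g \cdot \bar{\partial}_A \cdot g^{-1} \cdot g \cdot \bar{\partial}_A \cdot g^{-1} = g \cdot \bar{\partial}_A^2 \cdot g^{-1},
\end{equation*}
so $F_{g(A)}^{0,2} = g \cdot F_{A_0}^{0,2} \cdot g^{-1} = 0$ as long as $F_{A_0}^{0,2} = 0$, which holds since $A_0 \in \mathcal{A}^{1,1}$. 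The vanishing of $F_A^{2,0}$ follows symmetrically (or by taking adjoints, since $A$ is driven to remain unitary-type under the associated $h = g^*g$ picture), establishing $A(t) \in \mathcal{A}^{1,1}$.

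Next I would show $\bar{\partial}_{A(t)} \phi(t) = 0$. By definition of the flow, $\phi(t) = g(t) \cdot \phi_0$, and the connection transforms so that $\bar{\partial}_{A(t)} = g(t) \cdot \bar{\partial}_{A_0} \cdot g(t)^{-1}$. Therefore
\begin{equation*}
\bar{\partial}_{A(t)} \phi(t) = g(t) \cdot \bar{\partial}_{A_0} \cdot g(t)^{-1} \cdot \bigl(g(t) \cdot \phi_0\bigr) = g(t) \cdot \bar{\partial}_{A_0} \phi_0 = 0,
\end{equation*}
using the hypothesis $\bar{\partial}_{A_0} \phi_0 = 0$ from $(A_0, \phi_0) \in \mathcal{H}$. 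Thus the holomorphicity of the section relative to the evolving connection is exact and automatic.

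The only genuine point requiring care—and the step I expect to be the main obstacle—is that these formal identities presuppose that the path $g(t) \in \mathcal{G}^{\mathbb{C}}$ is well-defined and that $A(t) = g(t)(A_0)$, $\phi(t) = g(t)(\phi_0)$ genuinely coincides with the solution of the flow \eqref{heat2}. In other words, one must know that the flow \eqref{heat2} is actually realized by a path in the complex gauge orbit of $(A_0, \phi_0)$; this is a consequence of Donaldson's reformulation (the flow reduces to an evolution of the metric $h = g^*g$, following \eqref{G2}), and it is here that the reduction to Hong's framework \cite{H} enters. Granting that the flow stays in the complex gauge orbit—which is the standard starting point of this method and is what the author invokes by saying the proofs are ``very similar to Hong's''—the two computations above complete the argument, and I would present them in that order. \qed
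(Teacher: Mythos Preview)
Your proposal is correct and follows essentially the same approach as the paper: both arguments use the transformation formulas \eqref{G1}--\eqref{G2} for the complex gauge action to verify algebraically that the two holomorphic conditions defining $\mathcal{H}$ are preserved along any path $g(t)$ in $\mathcal{G}^{\mathbb{C}}$. The only cosmetic difference is that the paper shows $F_{A(t)}\in\Omega^{1,1}$ via the explicit curvature formula $g^{-1}F_{g(A_0)}g = F_{A_0} + \bar{\partial}_{A_0}(h^{-1}\partial_{A_0} h)$ from \eqref{G2}, while you argue equivalently via $\bar{\partial}_{g(A)}^2 = g\,\bar{\partial}_{A_0}^2\,g^{-1}$; your closing caveat about identifying the complex-gauge flow with the gradient flow \eqref{heat2} is not needed here, since the lemma is stated directly for the orbit flow $A(t)=g(t)(A_0)$, $\phi(t)=g(t)(\phi_0)$.
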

\begin{proof} By \ref{G1} and \ref{G2}, we have \begin{eqnarray*}
{\bar \partial}_{g(A_{0})}&=&g\cdot {\bar \partial}_{A_{0}}\cdot g^{-1}; \\
g^{-1}F_{g(A_{0})}g&=& F_{A_{0}}+{\bar \partial}_{A_{0}}(h^{-1}\partial h), \end{eqnarray*}
where $h=g^{*}g$.
Thus by the transformation formulas \ref{G2} and conditions above we have \begin{eqnarray*}
F_{A(t)}=F_{g^{*}(A_{0})}=g(F_{A_{0}}+{\bar \partial}_{A_{0}}(h^{-1}\partial h))g^{-1}\in \Omega^{1,1}. \end{eqnarray*}
On the other hand, by \ref{G2} and conditions above we have
\begin{eqnarray*} {\bar \partial}_{A(t)}\phi(t)=g\cdot {\bar \partial}_{A_{0}}\cdot g^{-1}g\phi_{0}=g({\bar \partial}_{A_{0}}\phi_{0})=0.
\end{eqnarray*}
This completes the proof. $\hfill\Box$
\end{proof}

\begin{lemma}  \label{lem2}
Assume the same conditions as lemma \ref{lem1}. and write\begin{eqnarray*} A=A(t)=g(t)(A_{0}), \phi=\phi(t)=g(t)(u_{0}),  \end{eqnarray*} we have \begin{eqnarray}
({\bar \partial}_{A}-\partial_{A})(\phi\otimes\phi^{*}-\tau I))= -2J_{A, \phi}. \end{eqnarray}
\end{lemma}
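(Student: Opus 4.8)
The plan is to compute the left-hand side directly via the Leibniz rule on the endomorphism bundle $\operatorname{End}E\cong E\otimes E^{*}$, using two structural facts: that the identity section is parallel, and that the flow stays holomorphic by Lemma \ref{lem1}. First I would dispose of the $\tau I$ term: the connection induced by $A$ on $\operatorname{End}E$ annihilates the identity section (equivalently $d_{A}I=0$), so $\partial_{A}I=\bar{\partial}_{A}I=0$ and it suffices to evaluate $(\bar{\partial}_{A}-\partial_{A})(\phi\otimes\phi^{*})$.

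The next ingredient, and the one requiring care, is the adjoint relation coming from the unitarity of $A$, i.e.\ the compatibility of $d_{A}$ with the fixed metric $H_{0}$. Differentiating $H_{0}(\psi,\phi)$ and using metric compatibility, the induced connection on $E^{*}$ satisfies
\[
\bar{\partial}_{A}(\phi^{*})=(\partial_{A}\phi)^{*},\qquad \partial_{A}(\phi^{*})=(\bar{\partial}_{A}\phi)^{*},
\]
the essential point being that taking adjoints interchanges the holomorphic $(1,0)$ and antiholomorphic $(0,1)$ types. I would then invoke Lemma \ref{lem1}, which gives $\bar{\partial}_{A}\phi=0$ along the flow, so that $d_{A}\phi=\partial_{A}\phi$ and $(\bar{\partial}_{A}\phi)^{*}=0$.

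Assembling these, the Leibniz rule yields
\[
\bar{\partial}_{A}(\phi\otimes\phi^{*})=(\bar{\partial}_{A}\phi)\otimes\phi^{*}+\phi\otimes(\partial_{A}\phi)^{*}=\phi\otimes(d_{A}\phi)^{*},
\]
\[
\partial_{A}(\phi\otimes\phi^{*})=(\partial_{A}\phi)\otimes\phi^{*}+\phi\otimes(\bar{\partial}_{A}\phi)^{*}=(d_{A}\phi)\otimes\phi^{*},
\]
and subtracting gives $\phi\otimes(d_{A}\phi)^{*}-(d_{A}\phi)\otimes\phi^{*}=-2J_{A,\phi}$, which is the asserted identity. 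The main obstacle is not the algebra but the bookkeeping: one must establish the adjoint type-swapping relation correctly and track the $(1,0)/(0,1)$ types throughout. I also note, for the types to close up to an $\operatorname{End}E$-valued one-form, the definition of $J_{A,\phi}$ must be read as $\tfrac{1}{2}\bigl(d_{A}\phi\otimes\phi^{*}-\phi\otimes(d_{A}\phi)^{*}\bigr)$, so that $J_{A,\phi}$ is a skew-Hermitian endomorphism-valued one-form; with this reading the final comparison is immediate.
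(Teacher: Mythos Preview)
Your proof is correct and follows essentially the same approach as the paper: both use Lemma~\ref{lem1} ($\bar{\partial}_{A}\phi=0$) together with the Leibniz rule and the type-swapping $\bar{\partial}_{A}(\phi^{*})=(\partial_{A}\phi)^{*}$ to reduce the left side to $-\partial_{A}\phi\otimes\phi^{*}+\phi\otimes(\partial_{A}\phi)^{*}$, and then rewrite $J_{A,\phi}$ via $d_{A}\phi=\partial_{A}\phi$. You are in fact more explicit than the paper---the paper invokes ``the K\"ahler identities \eqref{Ka2}'' at the step where you correctly identify unitarity (metric compatibility) as the source of the adjoint type-swap, and you also spell out why the $\tau I$ term drops out.
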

\begin{proof}
By lemma \ref{lem1} and the K$\ddot{\textmd{a}}$hler identities \ref{Ka2}, we have
\begin{eqnarray}
({\bar \partial}_{A}-\partial_{A})(\phi\otimes\phi^{*}-\tau I))&=&
-\partial_{A}\phi\otimes\phi^{*}+\phi\otimes({\partial}_{A}\phi)^{*}.
\end{eqnarray}
On the other hand, by lemma \ref{lem1},we have
\begin{eqnarray}
J_{A, \phi}=\frac{1}{2}(d_{A}\phi\otimes\phi^{*}-\phi\otimes(d_{A}\phi)^{*})
=\frac{1}{2}(\partial_{A}\phi\otimes\phi^{*}-\phi\otimes({\partial}_{A}\phi)^{*}).
\end{eqnarray}
We complete the proof. $\hfill\Box$
\end{proof}

\begin{lemma}  \label{lem3}
Assume the same conditions as lemma \ref{lem1}, we have
\begin{eqnarray}
d_{A}^{*}F_{A}=i(\partial_{A}-{\bar \partial}_{A})\Lambda F_{A}. \end{eqnarray}
\end{lemma}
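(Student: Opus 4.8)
The plan is to establish the identity
\begin{equation*}
d_{A}^{*}F_{A}=i(\partial_{A}-{\bar\partial}_{A})\Lambda F_{A}
\end{equation*}
by expanding $d_{A}^{*}=\partial_{A}^{*}+{\bar\partial}_{A}^{*}$ and applying the K\"ahler identities \ref{Ka1} to each piece, using crucially that the hypotheses of Lemma \ref{lem1} force $F_{A}$ to be of pure type $(1,1)$. First I would recall from Lemma \ref{lem1} that under the flow the curvature stays of type $(1,1)$, so $F_{A}=F_{A}^{1,1}$ and in particular $F_{A}^{2,0}=F_{A}^{0,2}=0$. This is the structural fact that makes the computation collapse to the stated form, so I would invoke it at the very start.

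Next I would write $d_{A}^{*}F_{A}=\partial_{A}^{*}F_{A}+{\bar\partial}_{A}^{*}F_{A}$ and substitute the full K\"ahler identities \ref{Ka1}, namely ${\bar\partial}_{A}^{*}=i[\partial_{A},\Lambda]=i(\partial_{A}\Lambda-\Lambda\partial_{A})$ and $\partial_{A}^{*}=-i[{\bar\partial}_{A},\Lambda]=-i({\bar\partial}_{A}\Lambda-\Lambda{\bar\partial}_{A})$. Applying these to $F_{A}$ gives
\begin{equation*}
d_{A}^{*}F_{A}=-i({\bar\partial}_{A}\Lambda F_{A}-\Lambda{\bar\partial}_{A}F_{A})+i(\partial_{A}\Lambda F_{A}-\Lambda\partial_{A}F_{A}).
\end{equation*}
The terms $i\partial_{A}\Lambda F_{A}-i{\bar\partial}_{A}\Lambda F_{A}$ are already exactly $i(\partial_{A}-{\bar\partial}_{A})\Lambda F_{A}$, the desired right-hand side, so it remains to show the leftover terms $i\Lambda{\bar\partial}_{A}F_{A}-i\Lambda\partial_{A}F_{A}$ vanish.

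To kill those leftover terms I would use the Bianchi identity $d_{A}F_{A}=0$, which splits by type into $\partial_{A}F_{A}=0$ and ${\bar\partial}_{A}F_{A}=0$ once $F_{A}$ is of pure type $(1,1)$: indeed $d_{A}F_{A}=(\partial_{A}+{\bar\partial}_{A})F_{A}^{1,1}$ decomposes into a $(2,1)$-part $\partial_{A}F_{A}$ and a $(1,2)$-part ${\bar\partial}_{A}F_{A}$, each of which must separately vanish. Hence $\Lambda\partial_{A}F_{A}=\Lambda{\bar\partial}_{A}F_{A}=0$ and the extra terms drop out, leaving precisely $d_{A}^{*}F_{A}=i(\partial_{A}-{\bar\partial}_{A})\Lambda F_{A}$.

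I expect the main obstacle to be purely bookkeeping rather than conceptual: one must be careful that the K\"ahler identities \ref{Ka1} are being applied on the correct form-degree (here $F_{A}$ lives in $\Omega^{1,1}(X,\mathrm{End}\,E)$, not in the degree where the simplified identities \ref{Ka2} hold), and one must track the signs in the commutators $[\partial_{A},\Lambda]$ and $[{\bar\partial}_{A},\Lambda]$ correctly so that the $\partial_{A}\Lambda F_{A}$ and ${\bar\partial}_{A}\Lambda F_{A}$ terms survive with opposite signs while the $\Lambda\partial_{A}F_{A}$ and $\Lambda{\bar\partial}_{A}F_{A}$ terms are the ones eliminated by Bianchi. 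The only genuine input beyond the definitions is the pairing of the type decomposition of $F_{A}$ (from Lemma \ref{lem1}) with the Bianchi identity, so I would make sure both of those are explicitly cited.
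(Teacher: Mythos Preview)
Your proposal is correct and follows essentially the same approach as the paper: both arguments apply the K\"ahler identities \ref{Ka1} to $d_A^*F_A=(\partial_A^*+\bar\partial_A^*)F_A$ and then eliminate the $\Lambda\partial_A F_A$, $\Lambda\bar\partial_A F_A$ terms by combining the Bianchi identity with the type decomposition coming from Lemma \ref{lem1}. The only cosmetic difference is ordering---the paper first derives $\partial_A F_A=\bar\partial_A F_A=0$ and then applies the K\"ahler identities, while you apply the identities first and then invoke Bianchi---but the content is identical.
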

\begin{proof} First by Bianchi identity $d_{A}F_{A}=0$ we have \begin{equation}{\partial}_{A}F_{A}=-{\bar \partial}_{A}F_{A}. \end{equation}
By lemma \ref{lem1} we know $F_{A}\in \Omega^{1,1}$, so $ {\partial}_{A}F_{A}\in \Omega^{2,1}, {\bar \partial}_{A}F_{A}\in \Omega^{1,2}$, we have \begin{equation} {\partial}_{A}F_{A}={\bar \partial}_{A}F_{A}=0. \end{equation}
Thus by the K$\ddot{\textmd{a}}$hler identities \ref{Ka1},  \ref{Ka2} we have
\begin{eqnarray*} d_{A}^{*}F_{A}&=&({\bar \partial}_{A}^{*}+{\partial}_{A}^{*})F_{A}\\
&=&i({\partial}_{A}-{\bar \partial}_{A})\Lambda F_{A}-i\Lambda({\partial}_{A}F_{A}-{\bar \partial}_{A}F_{A})\\
&=&i({\partial}_{A}-{\bar \partial}_{A})\Lambda F_{A}. \end{eqnarray*}
This completes the proof. $\hfill\Box$
\end{proof}
\begin{lemma}  \label{lem4}
Assume the same conditions as lemma \ref{lem1}, we have \begin{eqnarray*}
d_{A}^{*}d_{A}=i\Lambda({\bar \partial}_{A}{\partial}_{A}-{\partial}_{A}{\bar \partial}_{A})
\end{eqnarray*} and
\begin{eqnarray*}
d_{A}^{*}d_{A}-i\Lambda F_{A}=2{\bar \partial}_{A}^{*}{\bar \partial}_{A}.
\end{eqnarray*}
\end{lemma}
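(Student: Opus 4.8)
The plan is to work entirely on $\Omega^{0}(X, E)$ and exploit the bidegree decomposition $d_{A} = \partial_{A} + \bar\partial_{A}$ together with the K$\ddot{\textmd{a}}$hler identities \ref{Ka2}. First I would expand
$$d_{A}^{*}d_{A} = (\partial_{A}^{*} + \bar\partial_{A}^{*})(\partial_{A} + \bar\partial_{A}) = \partial_{A}^{*}\partial_{A} + \bar\partial_{A}^{*}\bar\partial_{A} + \partial_{A}^{*}\bar\partial_{A} + \bar\partial_{A}^{*}\partial_{A}.$$
Acting on a section $\phi\in\Omega^{0}(X,E)$, we have $\partial_{A}\phi\in\Omega^{1,0}$ and $\bar\partial_{A}\phi\in\Omega^{0,1}$; since $\partial_{A}^{*}$ lowers the holomorphic degree and $\bar\partial_{A}^{*}$ lowers the antiholomorphic degree, the cross terms $\partial_{A}^{*}\bar\partial_{A}\phi$ and $\bar\partial_{A}^{*}\partial_{A}\phi$ land in $\Omega^{-1,1}$ and $\Omega^{1,-1}$ respectively, hence vanish. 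This leaves the clean splitting $d_{A}^{*}d_{A} = \partial_{A}^{*}\partial_{A} + \bar\partial_{A}^{*}\bar\partial_{A}$.

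For the first identity I would substitute the K$\ddot{\textmd{a}}$hler identities \ref{Ka2} into the two surviving terms, using $\partial_{A}^{*} = i\Lambda\bar\partial_{A}$ on $\Omega^{1,0}(X,E)$ and $\bar\partial_{A}^{*} = -i\Lambda\partial_{A}$ on $\Omega^{0,1}(X,E)$. This gives $\partial_{A}^{*}\partial_{A}\phi = i\Lambda\bar\partial_{A}\partial_{A}\phi$ and $\bar\partial_{A}^{*}\bar\partial_{A}\phi = -i\Lambda\partial_{A}\bar\partial_{A}\phi$, and adding the two yields $d_{A}^{*}d_{A} = i\Lambda(\bar\partial_{A}\partial_{A} - \partial_{A}\bar\partial_{A})$, which is the first claimed formula.

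For the second identity the essential input is Lemma \ref{lem1}: since $A = A(t)$ stays in $\mathcal{A}^{1,1}$, its curvature is of type $(1,1)$, so $\partial_{A}^{2}$ (the $(2,0)$ part) and $\bar\partial_{A}^{2}$ (the $(0,2)$ part) both vanish, and the full curvature acts on sections as $F_{A} = \bar\partial_{A}\partial_{A} + \partial_{A}\bar\partial_{A}$. Combining this with the two relations $\partial_{A}^{*}\partial_{A} = i\Lambda\bar\partial_{A}\partial_{A}$ and $\bar\partial_{A}^{*}\bar\partial_{A} = -i\Lambda\partial_{A}\bar\partial_{A}$ established above, I would compute
$$\partial_{A}^{*}\partial_{A} - \bar\partial_{A}^{*}\bar\partial_{A} = i\Lambda(\bar\partial_{A}\partial_{A} + \partial_{A}\bar\partial_{A}) = i\Lambda F_{A}.$$
Hence $\partial_{A}^{*}\partial_{A} = \bar\partial_{A}^{*}\bar\partial_{A} + i\Lambda F_{A}$, and feeding this back into $d_{A}^{*}d_{A} = \partial_{A}^{*}\partial_{A} + \bar\partial_{A}^{*}\bar\partial_{A}$ produces $d_{A}^{*}d_{A} = 2\bar\partial_{A}^{*}\bar\partial_{A} + i\Lambda F_{A}$, which is exactly $d_{A}^{*}d_{A} - i\Lambda F_{A} = 2\bar\partial_{A}^{*}\bar\partial_{A}$.

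The main obstacle here is not analytic but a matter of careful bookkeeping: one must track bidegrees precisely to justify the vanishing of the cross terms, and must apply each K$\ddot{\textmd{a}}$hler identity in \ref{Ka2} on exactly the space where it holds. The only nontrivial structural fact is that the curvature acts as $F_{A} = \bar\partial_{A}\partial_{A} + \partial_{A}\bar\partial_{A}$ on sections, which is available precisely because integrability (Lemma \ref{lem1}) annihilates the $(2,0)$ and $(0,2)$ components of $F_{A}$. No estimates are needed: both identities are algebraic consequences of the K$\ddot{\textmd{a}}$hler identities and the type-$(1,1)$ condition.
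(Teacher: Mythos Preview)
Your proposal is correct and follows essentially the same route as the paper's proof: expand $d_{A}^{*}d_{A}$ using the type decomposition, drop the cross terms by bidegree, apply the K\"ahler identities \ref{Ka2} to obtain the first formula and the splitting $d_{A}^{*}d_{A}=\partial_{A}^{*}\partial_{A}+\bar\partial_{A}^{*}\bar\partial_{A}$, and then compute $\partial_{A}^{*}\partial_{A}-\bar\partial_{A}^{*}\bar\partial_{A}=i\Lambda F_{A}$ to deduce the second formula. The only cosmetic difference is that the paper writes $i\Lambda(\bar\partial_{A}\partial_{A}+\partial_{A}\bar\partial_{A})=i\Lambda(d_{A}^{2})=i\Lambda F_{A}$ directly, whereas you invoke the integrability from Lemma~\ref{lem1} to identify $F_{A}$ with $\bar\partial_{A}\partial_{A}+\partial_{A}\bar\partial_{A}$ on sections; both are valid and amount to the same computation.
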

\begin{proof} By the K$\ddot{\textmd{a}}$hler identities and lemma \ref{lem1}, \begin{eqnarray*}
d^{*}_{A}d_{A}&=&({\bar \partial}^{*}_{A}+{\partial}^{*}_{A})({\bar \partial}_{A}+{\partial}_{A})\\
&=&i\Lambda({\bar \partial}_{A}{\partial}_{A}-{\partial}_{A}{\bar \partial}_{A}),
\end{eqnarray*}
and we also have \begin{eqnarray*} d^{*}_{A}d_{A}&=&{\partial}^{*}_{A}{\partial}_{A}+{\bar \partial}^{*}_{A}{\bar \partial}_{A}. \end{eqnarray*}
Then
\begin{eqnarray*}
{\partial}^{*}_{A}{\partial}_{A}-{\bar \partial}^{*}_{A}{\bar \partial}_{A}&=&i\Lambda{\bar \partial}_{A}{\partial}_{A}+i\Lambda{\partial}_{A}{\bar \partial}_{A}\\
&=&i\Lambda(d^{2}_{A})\\
&=&i\Lambda F_{A}. \end{eqnarray*}
Therefore we have \begin{eqnarray*}d_{A}^{*}d_{A}-i\Lambda F_{A}=2{\bar \partial}_{A}^{*}{\bar \partial}_{A}. \end{eqnarray*} This completes the proof. $\hfill\Box$
 \end{proof}
Now we will prove the following equality, which is key to later estimates.
\begin{lemma} \label{equ}
For the curvature $F_{A}$ of the connection $A$, denote
$$\hat{e}:=|F_{A, \phi}|^{2}=|\Lambda F_{A}-\frac{i}{2}(\phi\otimes\phi^{*}-\tau I)|^{2}.$$
Let $(A, \phi)(t, x)$ be a solution to the gradient flow equations \ref{f2} on $X\times[0, T)$ with $T\leq +\infty$. Then we have
\begin{eqnarray*}
 (\frac{\partial}{\partial t}+\Delta)\hat{e}=-2|\nabla_{A}(\Lambda F_{A}-\frac{i}{2}\phi\otimes\phi^{*})|^{2}-2|\frac{i}{2}(|\phi|^{2}-\tau )\phi-\Lambda F_{A}\phi|^{2}.
\end{eqnarray*}
where $\nabla_{A}=d_{A}$, and $\Delta=\nabla_{A}^{*}\nabla_{A}$ is the Laplacian.
\end{lemma}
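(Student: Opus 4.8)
The plan is to read the asserted identity as a parabolic Weitzenb\"ock formula for the $\mathrm{End}\,E$-valued function $\mu:=F_{A,\phi}=\Lambda F_A-\frac{i}{2}(\phi\otimes\phi^*-\tau I)$, and to reduce it to a pointwise evolution equation for $\mu$ alone. First I would rewrite the two terms on the right in terms of $\mu$ itself. Since $I$ is parallel, $\nabla_A\big(\Lambda F_A-\frac{i}{2}\phi\otimes\phi^*\big)=\nabla_A\mu$; and from $(\phi\otimes\phi^*)\phi=|\phi|^2\phi$ one gets $\mu\phi=\Lambda F_A\phi-\frac{i}{2}(|\phi|^2-\tau)\phi$, so that $\frac{i}{2}(|\phi|^2-\tau)\phi-\Lambda F_A\phi=-\mu\phi$. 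Hence the claim is equivalent to
\[\Big(\frac{\partial}{\partial t}+\Delta\Big)|\mu|^2=-2|\nabla_A\mu|^2-2|\mu\phi|^2.\]
The product rule for $\Delta=\nabla_A^*\nabla_A$ applied to $|\mu|^2=\langle\mu,\mu\rangle$ gives $\Delta|\mu|^2=2\langle\Delta\mu,\mu\rangle-2|\nabla_A\mu|^2$, so the whole statement collapses to the single pointwise identity
\[\Big\langle\Big(\frac{\partial}{\partial t}+\Delta\Big)\mu,\ \mu\Big\rangle=-|\mu\phi|^2.\]

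The core of the work is to compute $(\partial_t+\Delta)\mu$, which I would do by treating the two pieces of $\mu$ separately. For the section $\phi$, Lemma \ref{lem4} together with $\bar\partial_A\phi=0$ gives $\Delta\phi=d_A^*d_A\phi=i\Lambda F_A\,\phi$; feeding this into the second flow equation in \ref{heat2} yields the clean evolution $(\partial_t+\Delta)\phi=-\frac{1}{2}(|\phi|^2-\tau)\phi$, and the rough-Laplacian product rule then gives $(\partial_t+\Delta)(\phi\otimes\phi^*)=[(\partial_t+\Delta)\phi]\otimes\phi^*+\phi\otimes[(\partial_t+\Delta)\phi]^*-2g^{ij}\nabla_i\phi\otimes\nabla_j\phi^*$. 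For the curvature piece I would use $\partial_tF_A=d_A\dot A$ with $\dot A=-d_A^*F_A-J_{A,\phi}$, split $\dot A$ into types using Lemma \ref{lem3} and Lemma \ref{lem2}, apply the K\"ahler identities \ref{Ka2} to the $(1,1)$-part of $d_A\dot A$, and observe that the leading term of $\partial_t\Lambda F_A$ is exactly $-\Delta\Lambda F_A$, which cancels against $\Delta\Lambda F_A$. This leaves
\[\Big(\frac{\partial}{\partial t}+\Delta\Big)\Lambda F_A=\frac{i}{2}\big[\bar\partial_A^*(\phi\otimes(\partial_A\phi)^*)+\partial_A^*(\partial_A\phi\otimes\phi^*)\big].\]

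To finish I would expand this right-hand side by \ref{Ka2} once more, using $\bar\partial_A\partial_A\phi=F_A\phi$ (from $\bar\partial_A\phi=0$ together with $(\partial_A\bar\partial_A+\bar\partial_A\partial_A)\phi=F_A\phi$) and the dual-bundle relation $\Lambda F_{E^*}\phi^*=(\Lambda F_A\phi)^*$. The purely first-order contributions---the terms proportional to $\Lambda(\partial_A\phi\otimes(\partial_A\phi)^*)$ coming from $(\partial_t+\Delta)\Lambda F_A$ and the cross term $-2g^{ij}\nabla_i\phi\otimes\nabla_j\phi^*$ coming from the product rule---cancel pointwise; the mechanism is that holomorphicity kills one of the two mixed contractions, so the single surviving metric contraction matches the K\"ahler contraction $\Lambda$ exactly. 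What remains assembles, after recombining into $\mu=\Lambda F_A-\frac{i}{2}(\phi\otimes\phi^*-\tau I)$, into the compact expression
\[\Big(\frac{\partial}{\partial t}+\Delta\Big)\mu=\frac{1}{2}\big[\phi\otimes(\mu\phi)^*-(\mu\phi)\otimes\phi^*\big].\]
Pairing with $\mu$ through the elementary identity $\langle a\otimes b^*,\mu\rangle=\langle a,\mu b\rangle$ and the skew-Hermitian symmetry $\mu^*=-\mu$ then gives $\langle(\partial_t+\Delta)\mu,\mu\rangle=\frac{1}{2}(\langle\phi,\mu^2\phi\rangle-|\mu\phi|^2)=-|\mu\phi|^2$, which is precisely the reduced identity.

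The main obstacle, and the step demanding the most care, is the computation of $(\partial_t+\Delta)\Lambda F_A$ together with the pointwise cancellation of the first-order terms: one must keep the type decomposition of $\dot A$ straight, apply the K\"ahler identities correctly to $\mathrm{End}\,E$- and $E^*$-valued forms, and track the adjoint/dual conventions entering $(\partial_A\phi)^*$, $\phi^*$ and $F_{E^*}$. A useful structural safeguard is that every operator here acts on $0$-forms (sections of $E$ or of $\mathrm{End}\,E$), so $d_A^*d_A$ coincides with the rough Laplacian $\nabla_A^*\nabla_A$ with no curvature-of-base Weitzenb\"ock correction; this is exactly what makes the final formula as clean as stated, with the curvature of $E$ entering only algebraically through the $\Lambda F_A$ terms packaged by the K\"ahler identities.
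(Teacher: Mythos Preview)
Your reduction of the target identity to the clean form $(\partial_t+\Delta)|\mu|^2=-2|\nabla_A\mu|^2-2|\mu\phi|^2$ and then to $\langle(\partial_t+\Delta)\mu,\mu\rangle=-|\mu\phi|^2$ is correct, and your intermediate formula $(\partial_t+\Delta)\Lambda F_A=-\Lambda d_A J_{A,\phi}=\tfrac{i}{2}[\bar\partial_A^*(\phi\otimes(\partial_A\phi)^*)+\partial_A^*(\partial_A\phi\otimes\phi^*)]$ checks out against the K\"ahler identities \ref{Ka2} exactly as you outline. The argument is sound.

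The paper proceeds differently in organisation: it never isolates $(\partial_t+\Delta)\mu$ but instead computes $\tfrac12\partial_t\hat e$ and $\tfrac12\Delta\hat e$ separately, expanding each as an inner product of the form $\mathrm{Re}\langle\,\cdot\,,\mu\rangle$, and then adds them so that the second-order pieces $\langle\nabla_A^*\nabla_A(\Lambda F_A)+\Lambda d_A J_{A,\phi},\mu\rangle$ and the analogous $\phi\otimes\phi^*$-terms cancel line by line, leaving the three scalar terms $-2|\Lambda F_A\phi|^2$, $-\tfrac12|\phi|^2(\tau-|\phi|^2)^2$, and $2\,\mathrm{Re}\langle i(|\phi|^2-\tau)\phi,\Lambda F_A\phi\rangle$, which it then completes into a perfect square. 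Your route packages all of this into a single evolution equation for $\mu$ and a final pairing step exploiting $\mu^*=-\mu$; this has the advantage of yielding a reusable structural formula $(\partial_t+\Delta)\mu=\tfrac12[\phi\otimes(\mu\phi)^*-(\mu\phi)\otimes\phi^*]$ and makes the appearance of $|\mu\phi|^2$ transparent. The paper's route is more hands-on but avoids having to track the first-order cancellation between $\Lambda(\partial_A\phi\wedge(\partial_A\phi)^*)$ and the Leibniz cross term in $\Delta(\phi\otimes\phi^*)$ as a separate step, since those contributions live inside its $\langle\nabla_A^*\nabla_A(\cdot),\mu\rangle$ bookkeeping and cancel automatically when $\partial_t$ and $\Delta$ are added. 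Both approaches rest on the same inputs: Lemmas \ref{lem2}--\ref{lem4}, the holomorphicity $\bar\partial_A\phi=0$ along the flow, and the skew-Hermitian nature of $\mu$.
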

\begin{proof}
By direct computation, we have \begin{eqnarray*}
\frac{1}{2}\frac{\partial}{\partial t}\hat{e}&=&Re\langle\frac{\partial}{\partial t}(\Lambda F_{A})+\frac{i}{2}\frac{\partial}{\partial t}(\phi\otimes\phi^{*}), \Lambda F_{A}-\frac{i}{2}(\phi\otimes\phi^{*}-\tau I)\rangle\\
&=& Re\langle\frac{\partial}{\partial t}(\Lambda F_{A}-\frac{i}{2}[\frac{\partial}{\partial t}\phi\otimes\phi^{*}+\phi\otimes(\frac{\partial}{\partial t}\phi)^{*}], \Lambda F_{A}-\frac{i}{2}(\phi\otimes\phi^{*}-\tau I)\rangle.
\end{eqnarray*}
Now by the gradient flow equations \ref{f2}, lemma \ref{lem3} and lemma \ref{lem4} we get
\begin{eqnarray*}
&&Re\langle\frac{\partial}{\partial t}(\Lambda F_{A}), \Lambda F_{A}+\frac{i}{2}(\phi\otimes\phi^{*}-\tau I)\rangle\\
&=&Re\langle\Lambda d_{A}(\frac{\partial A}{\partial t}), \Lambda F_{A}+\frac{i}{2}(\phi\otimes\phi^{*}-\tau I)\rangle\\
&=&Re\langle-\Lambda d_{A}(d_{A}^{*}F_{A}+J_{A, \phi}),  \Lambda F_{A}+\frac{i}{2}(\phi\otimes\phi^{*}-\tau I)\rangle\\
&=&Re\langle i\Lambda d_{A}({\bar \partial}_{A}-{\partial}_{A})\Lambda F_{A}-\Lambda d_{A}J_{A, \phi}, \Lambda F_{A}+\frac{i}{2}(\phi\otimes\phi^{*}-\tau I)\rangle \\
&=&-Re\langle\nabla_{A}^{*}\nabla_{A}(\Lambda F_{A})+\Lambda d_{A}J_{A, \phi},  \Lambda F_{A}+\frac{i}{2}(\phi\otimes\phi^{*}-\tau I)\rangle,
\end{eqnarray*}
and
\begin{eqnarray*}
&&Re\langle-\frac{i}{2}[\frac{\partial}{\partial t}\phi\otimes\phi^{*}+\phi\otimes(\frac{\partial}{\partial t}\phi)^{*}], \Lambda F_{A}-\frac{i}{2}(\phi\otimes\phi^{*}-\tau I)\rangle\\
&=&Re\langle-\frac{i}{2}[-\nabla^{*}_{A}\nabla_{A}\phi+\frac{1}{2}\phi(\tau-|\phi|^{2})]\otimes \phi^{*}, \Lambda F_{A}-\frac{i}{2}(\phi\otimes\phi^{*}-\tau I)\rangle\\
&&+Re\langle-\frac{i}{2}\phi\otimes[-\nabla^{*}_{A}\nabla_{A}\phi+\frac{1}{2}\phi(\tau-|\phi|^{2})]^{*}, \Lambda F_{A}-\frac{i}{2}(\phi\otimes\phi^{*}-\tau I)\rangle\\
&=&-\frac{1}{2}Re\langle-i\nabla^{*}_{A}\nabla_{A}\phi \otimes\phi^{*}-i\phi\otimes (\nabla^{*}_{A}\nabla_{A}\phi^{*}), \Lambda F_{A}-\frac{i}{2}(\phi\otimes\phi^{*}-\tau I)\rangle\\
&&+\frac{1}{2}Re\langle  i(|\phi|^{2}-\tau)\phi\otimes\phi^{*}, \Lambda F_{A}-\frac{i}{2}(\phi\otimes\phi^{*}-\tau I) \rangle\\
&=&-\frac{1}{2}Re\langle-i\nabla^{*}_{A}\nabla_{A}\phi \otimes\phi^{*}-i\phi\otimes (\nabla^{*}_{A}\nabla_{A}\phi^{*}), \Lambda F_{A}-\frac{i}{2}(\phi\otimes\phi^{*}-\tau I)\rangle\\
&&+\frac{1}{2}Re\langle  i(|\phi|^{2}-\tau)\phi, \Lambda F_{A}\phi\rangle-\frac{1}{4}|\phi|^{2}(\tau-|\phi|^{2})^{2}.
\end{eqnarray*}
Then add equalities above, we have \begin{eqnarray*}
\frac{1}{2}\frac{\partial}{\partial t}\hat{e}&=&-Re\langle\nabla_{A}^{*}\nabla_{A}(\Lambda F_{A})+\Lambda d_{A}J_{A, \phi},  \Lambda F_{A}+\frac{i}{2}(\phi\otimes\phi^{*}-\tau I)\rangle\\
&&-\frac{1}{2}Re\langle-i\nabla^{*}_{A}\nabla_{A}\phi \otimes\phi^{*}-i\phi\otimes (\nabla^{*}_{A}\nabla_{A}\phi^{*}), \Lambda F_{A}-\frac{i}{2}(\phi\otimes\phi^{*}-\tau I)\rangle\\
&&+\frac{1}{2}Re\langle  i(|\phi|^{2}-\tau)\phi, \Lambda F_{A}\phi\rangle-\frac{1}{4}|\phi|^{2}(\tau-|\phi|^{2})^{2}.
\end{eqnarray*}
On the other hand, we have\begin{eqnarray*}
\frac{1}{2}\triangle \hat{e}&=&\nabla^{*}_{A}Re\langle\nabla_{A}(\Lambda F_{A}-\frac{i}{2}(\phi\otimes\phi^{*}-\tau I), \Lambda F_{A}-\frac{i}{2}(\phi\otimes\phi^{*}-\tau I)\rangle\\
&=&Re\langle\nabla^{*}_{A}\nabla_{A}(\Lambda F_{A}-\frac{i}{2}(\phi\otimes\phi^{*}-\tau I)), \Lambda F_{A}-\frac{i}{2}(\phi\otimes\phi^{*}-\tau I)\rangle\\
&&-|\nabla_{A}(\Lambda F_{A}-\frac{i}{2}(\phi\otimes\phi^{*}-\tau I))|^{2}\\
&=&Re\langle\nabla_{A}^{*}\nabla_{A}(\Lambda F_{A})+\Lambda d_{A}J_{A, \phi},  \Lambda F_{A}+\frac{i}{2}(\phi\otimes\phi^{*}-\tau I)\rangle-|\nabla_{A}(\Lambda F_{A}-\frac{i}{2}(\phi\otimes\phi^{*}-\tau I))|^{2}\\
&&+\frac{1}{2}Re\langle-i\nabla^{*}_{A}\nabla_{A}\phi \otimes\phi^{*}-i\phi\otimes (\nabla^{*}_{A}\nabla_{A}\phi^{*}), \Lambda F_{A}-\frac{i}{2}(\phi\otimes\phi^{*}-\tau I)\rangle\\
&&-\frac{1}{2}Re\langle F_{A}\phi\otimes\phi^{*}-\phi^{*}\otimes(F_{A}\phi)*, \Lambda F_{A}-\frac{i}{2}(\phi\otimes\phi^{*}-\tau I)\rangle \\
&=&Re\langle\nabla_{A}^{*}\nabla_{A}(\Lambda F_{A})+\Lambda d_{A}J_{A, \phi},  \Lambda F_{A}+\frac{i}{2}(\phi\otimes\phi^{*}-\tau I)\rangle-|\nabla_{A}(\Lambda F_{A}-\frac{i}{2}(\phi\otimes\phi^{*}-\tau I))|^{2}\\
&&+\frac{1}{2}Re\langle-i\nabla^{*}_{A}\nabla_{A}\phi \otimes\phi^{*}-i\phi\otimes (\nabla^{*}_{A}\nabla_{A}\phi^{*}), \Lambda F_{A}-\frac{i}{2}(\phi\otimes\phi^{*}-\tau I)\rangle\\
&&+\frac{1}{2}Re\langle  i(|\phi|^{2}-\tau)\phi, \Lambda F_{A}\phi\rangle-|\Lambda F_{A}\phi|^{2}.
\end{eqnarray*}
Combining equalities above we get
\begin{eqnarray*}
(\frac{\partial}{\partial t}+\Delta)\hat{e}&=&-2|\nabla_{A}(\Lambda F_{A}-\frac{i}{2}(\phi\otimes\phi^{*}-\tau I))|^{2}\\
&&-2|\Lambda F_{A}\phi|^{2}-\frac{1}{2}|\phi|^{2}(\tau-|\phi|^{2})^{2}+2Re\langle  i(|\phi|^{2}-\tau)\phi, \Lambda F_{A}\phi\rangle\\
&=&-2|\nabla_{A}(\Lambda F_{A}-\frac{i}{2}(\phi\otimes\phi^{*}-\tau I))|^{2}-2|\frac{i}{2}(|\phi|^{2}-\tau)\phi-\Lambda F_{A}\phi|^{2}.
\end{eqnarray*}
This completes the proof.$\hfill\Box$
 \end{proof}
\begin{corollary}\label{cor1}
 Assume the same conditions as in lemma \ref{equ}, then
$\sup_{X}\hat{e}$ is a decreasing function of $t$.
\end{corollary}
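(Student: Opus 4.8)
The plan is to read the identity in Lemma \ref{equ} as a heat-type differential inequality and then apply the parabolic maximum principle on the compact manifold $X$. Since the right-hand side of that identity is a sum of two manifestly non-positive terms, we immediately obtain
\begin{equation*}
\left(\frac{\partial}{\partial t}+\Delta\right)\hat{e}\leq 0 \qquad \text{on } X\times[0,T),
\end{equation*}
so that $\hat{e}$ is a subsolution of the forward heat equation associated to the Bochner Laplacian $\Delta=\nabla_{A}^{*}\nabla_{A}$. Everything then reduces to the standard fact that the spatial supremum of a subsolution of the heat equation on a closed manifold is non-increasing in time.

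To make this precise I would fix two times $0\leq t_{1}<t_{2}<T$ and work on the parabolic cylinder $X\times[t_{1},t_{2}]$. Because $X$ is compact and without boundary, its parabolic boundary reduces to the initial slice $X\times\{t_{1}\}$, and the supremum $M(t):=\sup_{X}\hat{e}(\cdot,t)$ is actually attained for each fixed $t$. To deal with the non-strict inequality I would introduce the perturbed function $\hat{e}_{\varepsilon}:=\hat{e}-\varepsilon t$ with $\varepsilon>0$, which satisfies the strict inequality $(\partial_t+\Delta)\hat{e}_{\varepsilon}\leq -\varepsilon<0$. If $\hat{e}_{\varepsilon}$ attained its maximum over the cylinder at an interior point or on the final slice $\{t=t_{2}\}$, then at such a point spatial maximality forces $\Delta\hat{e}_{\varepsilon}\geq 0$ while the time condition forces $\partial_t\hat{e}_{\varepsilon}\geq 0$, whence $(\partial_t+\Delta)\hat{e}_{\varepsilon}\geq 0$, contradicting strict negativity. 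Hence the maximum of $\hat{e}_{\varepsilon}$ is achieved on $X\times\{t_{1}\}$, giving $\sup_{X}\hat{e}(\cdot,t_{2})-\varepsilon t_{2}\leq \sup_{X}\hat{e}(\cdot,t_{1})-\varepsilon t_{1}$; letting $\varepsilon\to 0$ yields $M(t_{2})\leq M(t_{1})$, which is exactly the asserted monotonicity. An equivalent route worth recording is Hamilton's trick: $M(t)$ is locally Lipschitz, its derivative at almost every $t$ equals $\partial_t\hat{e}(x_{t},t)$ for a point $x_{t}$ realizing the spatial maximum, and at $x_{t}$ one has $\Delta\hat{e}(x_{t},t)\geq 0$, so Lemma \ref{equ} gives $\frac{d}{dt}M(t)\leq 0$ almost everywhere.

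The argument is essentially routine once Lemma \ref{equ} is available, so there is no serious obstacle; the only point that genuinely requires care is the passage from the non-strict differential inequality to the monotonicity of the supremum, which is precisely what the $\varepsilon t$ perturbation (or, alternatively, Hamilton's trick) is designed to resolve. I would also note in passing that the smoothness of $(A,\phi)$ on $X\times[0,T)$, supplied by the local existence theory established elsewhere in the paper, is what legitimizes both the application of the maximum principle and the differentiation of $M(t)$.
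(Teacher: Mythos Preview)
Your proposal is correct and follows exactly the same approach as the paper: derive the differential inequality $(\partial_t+\Delta)\hat{e}\leq 0$ from Lemma~\ref{equ} and then invoke the parabolic maximum principle on the closed manifold $X$. The paper's own proof is in fact only two lines, simply citing the maximum principle without detail; your $\varepsilon t$ perturbation argument (and the alternative via Hamilton's trick) just makes that citation explicit.
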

\begin{proof}
By lemma \ref{equ} we have
$$
(\frac{\partial}{\partial t}+\Delta)\hat{e}\leq0,$$
by the maximum principal for the heat operator $(\frac{\partial}{\partial t})+\triangle$ on $X$ we prove our claim. $\hfill\Box$
\end{proof}
Essentially the following result can be found as Lemma 4 in \cite{H}.
\begin{lemma}\label{lem5}
Let $(A, \phi)(t, x)$ be a solution to the gradient flow equations on $X\times[0, T)$ with $T\leq +\infty$. Then for all $t\in [0, T)$, $$|\phi(t, x)|\leq \max\{\sup_{X}|\phi_{0}|, \tau\}.$$
\end{lemma}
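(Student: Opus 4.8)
The plan is to run a parabolic maximum principle directly on the scalar quantity $|\phi|^2$, in the same spirit as Corollary \ref{cor1}, but now carrying along a reaction term coming from the cubic nonlinearity in the $\phi$-equation. Concretely, I would first derive a differential inequality of the form $(\frac{\partial}{\partial t}+\Delta)|\phi|^2\leq |\phi|^2(\tau-|\phi|^2)$, and then compare the evolving spatial maximum of $|\phi|^2$ with the logistic ordinary differential equation $y'=y(\tau-y)$, whose equilibria $0$ and $\tau$ force the asserted bound.

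For the first step I would compute $\frac{\partial}{\partial t}|\phi|^2=2\,\mathrm{Re}\langle\frac{\partial\phi}{\partial t},\phi\rangle$ and substitute the second flow equation in \ref{f2}, namely $\frac{\partial\phi}{\partial t}=-d_A^*d_A\phi+\frac12\phi(\tau-|\phi|^2)$. Since $d_A$ acts on the section $\phi$ as the covariant derivative, $d_A^*d_A=\nabla_A^*\nabla_A$, and the Bochner--Kato identity $\frac12\Delta|\phi|^2=\mathrm{Re}\langle\nabla_A^*\nabla_A\phi,\phi\rangle-|\nabla_A\phi|^2$ (with $\Delta=\nabla_A^*\nabla_A$ the nonnegative Laplacian, consistent with Lemma \ref{equ}) converts the leading term into $-\Delta|\phi|^2-2|\nabla_A\phi|^2$. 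The zeroth order term contributes $2\,\mathrm{Re}\langle\frac12\phi(\tau-|\phi|^2),\phi\rangle=|\phi|^2(\tau-|\phi|^2)$. Collecting these and discarding the manifestly nonpositive gradient term $-2|\nabla_A\phi|^2$ yields
\[
\left(\frac{\partial}{\partial t}+\Delta\right)|\phi|^2\;\leq\;|\phi|^2(\tau-|\phi|^2).
\]

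For the second step I would set $m(t)=\max_{x\in X}|\phi(t,x)|^2$. At a point where the spatial maximum is attained the nonnegative Laplacian satisfies $\Delta|\phi|^2\geq 0$, so the inequality above gives $\frac{\partial}{\partial t}|\phi|^2\leq m(\tau-m)$ there; by Hamilton's trick (the upper Dini derivative of a maximum over the compact $X$ is controlled by the time derivative at a maximizing point) this upgrades to $\frac{d^+}{dt}m(t)\leq m(t)(\tau-m(t))$. Comparing with the logistic ODE $y'=y(\tau-y)$ with $y(0)=\max\{m(0),\tau\}$: if $m(0)\geq\tau$ the comparison solution is nonincreasing and stays in $[\tau,m(0)]$, while if $m(0)<\tau$ the comparison solution is the constant equilibrium $y\equiv\tau$; in either case $m(t)\leq\max\{m(0),\tau\}$. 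This is exactly the bound $|\phi(t,x)|^2\leq\max\{\sup_X|\phi_0|^2,\tau\}$, i.e. the asserted pointwise estimate for $|\phi|$.

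The Bochner computation is routine; the point that needs care is the passage from the pointwise inequality to the ODE comparison, that is, justifying Hamilton's trick for the only Lipschitz function $m(t)$ and invoking the comparison theorem for differential inequalities in the Dini sense. One should also double-check the sign conventions, so that $\Delta\geq 0$ at an interior maximum is used in the correct direction and the favorable sign of the cubic term is preserved; this is precisely what makes the estimate an a priori bound rather than a merely short-time one, and it is the analogue of Lemma 4 in \cite{H}.
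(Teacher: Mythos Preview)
Your proposal is correct and follows essentially the same approach as the paper: both derive the identity $(\frac{\partial}{\partial t}+\Delta)|\phi|^{2}=|\phi|^{2}(\tau-|\phi|^{2})-2|\nabla_{A}\phi|^{2}$ from the second equation in \ref{f2} and then invoke the parabolic maximum principle. The only cosmetic difference is that the paper argues by contradiction at a spacetime maximum point, whereas you package the same idea via Hamilton's trick and comparison with the logistic ODE.
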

\begin{proof}
By the gradient flow equations \ref{f2}, we have
\begin{eqnarray*}
(\frac{\partial}{\partial t}+\triangle)|\phi|^{2}&=&2Re\langle\frac{\partial\phi}{\partial t}, \phi\rangle+2Re\langle\nabla_{A}^{*}\nabla_{A}\phi, \phi\rangle-2|\nabla_{A}\phi|^{2}\\
&=&|\phi|^{2}(\tau-|\phi|^{2})-2|\nabla_{A}\phi|^{2}.
\end{eqnarray*}
Assume that $|\phi|^{2}$ attains its maximum on $X\times[0, T)$ at the point $(x_{0}, t_{0})$ with $0<t_{0}<T$. If $|\phi|^{2}(t_{0}, x_{0})\leq\tau$, it completes the proof. Otherwise, if $|\phi|^{2}(t_{0}, x_{0})>\tau$, then
$$(\frac{\partial}{\partial t}+\triangle)|\phi|^{2}(t_{0}, x_{0})<0. $$
This is contradicted with the maximum principle of the heat operator $(\frac{\partial}{\partial t}+\triangle)$. Then $|\phi|^{2}$ must attain its maximum point $\sup_{X}|\phi_{0}|$. This completes our proof.
$\hfill\Box$
\end{proof}
By corollary \ref{cor1} and lemma \ref{lem5}, we immediately get
\begin{lemma}\label{lem6}
Let $(A, \phi)(t, x)$ be a solution to the gradient flow equations \ref{f2} on $X\times[0, T)$ with $T\leq +\infty$, and denote
$$\hat{F}=\Lambda F_{A}.$$
Then $\sup_{X}|\hat{F}|$ is uniformly bounded for $t\in [0, T)$.
\end{lemma}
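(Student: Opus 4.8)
The plan is to read the bound directly off the definition of $F_{A,\phi}$ together with the two results that immediately precede the statement. Recall that $F_{A,\phi}=\Lambda F_{A}-\frac{i}{2}(\phi\otimes\phi^{*}-\tau I)$, so that pointwise on $X$ we have $\hat{F}=\Lambda F_{A}=F_{A,\phi}+\frac{i}{2}(\phi\otimes\phi^{*}-\tau I)$. The idea is therefore to control $\hat{F}$ by controlling the two summands on the right separately: the moment-map term $F_{A,\phi}$, whose pointwise norm squared is exactly $\hat{e}$, and the purely algebraic term $\phi\otimes\phi^{*}-\tau I$, which depends only on $\phi$.

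First I would apply the triangle inequality to obtain, at every point of $X$ and every $t\in[0,T)$,
\begin{equation*}
|\hat{F}|\le |F_{A,\phi}|+\tfrac{1}{2}|\phi\otimes\phi^{*}-\tau I|=\hat{e}^{1/2}+\tfrac{1}{2}|\phi\otimes\phi^{*}-\tau I|.
\end{equation*}
For the first term, Corollary \ref{cor1} asserts that $\sup_{X}\hat{e}$ is nonincreasing in $t$, hence bounded above by its value at $t=0$; thus $\hat{e}^{1/2}\le\big(\sup_{X}\hat{e}(0,\cdot)\big)^{1/2}$, a constant determined by the initial data $(A_{0},\phi_{0})$, uniformly in $x$ and $t$.

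For the second term I would use the pointwise estimate $|\phi\otimes\phi^{*}|\le|\phi|^{2}$ together with $|\tau I|=\sqrt{n}\,|\tau|$ for the identity on a rank-$n$ bundle, so that $|\phi\otimes\phi^{*}-\tau I|\le|\phi|^{2}+\sqrt{n}\,|\tau|$. By Lemma \ref{lem5} we have $|\phi(t,x)|\le M:=\max\{\sup_{X}|\phi_{0}|,\tau\}$ for all $t\in[0,T)$, whence this term is bounded by $M^{2}+\sqrt{n}\,|\tau|$ uniformly. Combining the two estimates gives
\begin{equation*}
\sup_{X}|\hat{F}|\le\big(\sup_{X}\hat{e}(0,\cdot)\big)^{1/2}+\tfrac{1}{2}\big(M^{2}+\sqrt{n}\,|\tau|\big),
\end{equation*}
a bound independent of $t$, which is precisely the assertion.

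There is no genuine analytic obstacle here: the statement is an algebraic consequence of the two uniform estimates already established, and no further use of the flow equation or the maximum principle is needed beyond what is packaged into Corollary \ref{cor1} and Lemma \ref{lem5}. The only points requiring a little care are the pointwise norm comparisons, namely that $|F_{A,\phi}|^{2}=\hat{e}$ holds by definition and that the rank-$n$ identity section and the rank-one tensor $\phi\otimes\phi^{*}$ are estimated with the correct numerical constants; these are routine and fix the explicit form of the final bound without affecting its uniformity in $t$.
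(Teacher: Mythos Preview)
Your argument is correct and is exactly the approach the paper has in mind: the lemma is stated there as an immediate consequence of Corollary~\ref{cor1} and Lemma~\ref{lem5}, and your decomposition $\hat{F}=F_{A,\phi}+\tfrac{i}{2}(\phi\otimes\phi^{*}-\tau I)$ together with the triangle inequality is precisely how one reads off the bound from those two inputs. Nothing further is needed.
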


\section{Existence}
\subsection{Local existence}
In this subsection we prove the local existence of the gradient flow equations \ref{f2}.
For the proof, we adopt Donaldson's approach \cite{D, H} to consider a flow of gauge transformation:
\begin{equation}\label{H}
\frac{\partial h}{\partial t}=-2ih[\Lambda F_{A_{0}}+\Lambda\bar{\partial}_{A_{0}}(h^{-1}\partial_{A_{0}}h)-\frac{i}{2}(\phi_{0}\otimes\phi_{0}^{*}h-\tau I))],
\end{equation}
with $h(0)=I$.
 \begin{theorem}
Assume $(A_{0}, \phi_{0})\in \mathcal{H}$, then there exist a positive constant $\epsilon>0$ and a smooth solution $(A(x, t), \phi(x, t))$ such that $(A(x, t), \phi(x, t))$ solve the gradient flow equations \ref{f2}
 \begin{eqnarray*}
\frac{\partial A}{\partial t}&=&-d^{*}_{A}F_{A}-J_{A,\phi};\\
\frac{\partial \phi}{\partial t}&=&-d^{*}_{A}d_{A}\phi+\frac{1}{2}\phi(\tau-|\phi|^{2}).
\end{eqnarray*}
in $X \times [0, \epsilon)$ with initial values $A(x, 0)=A_{0}$ and $u(x, 0)=u_{0}$.
  \end{theorem}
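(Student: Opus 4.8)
The plan is to circumvent the degeneracy of the gradient flow equations \ref{f2} --- which, being invariant under the unitary gauge group $\mathcal{G}$, are only weakly parabolic --- by following Donaldson's substitution and solving instead the auxiliary equation \ref{H} for the endomorphism $h=g^{*}g$, with its prescribed initial condition $h(0)=I$. First I would rewrite \ref{H} as a quasilinear evolution equation for $h$ and identify its principal part. The only top-order term is $-2ih\,\Lambda\bar{\partial}_{A_{0}}(h^{-1}\partial_{A_{0}}h)$; the curvature term $\Lambda F_{A_{0}}$ and the section term $\tfrac{i}{2}(\phi_{0}\otimes\phi_{0}^{*}h-\tau I)$ are of order zero in the derivatives of $h$ and hence harmless. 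Linearizing at a fixed positive self-adjoint $h$, the leading operator on a variation $\delta h$ is $\delta h\mapsto -2ih\,\Lambda\bar{\partial}_{A_{0}}(h^{-1}\partial_{A_{0}}\delta h)$, whose principal symbol I would compute using the K\"ahler identity $i\Lambda(\bar{\partial}_{A_{0}}\partial_{A_{0}}-\partial_{A_{0}}\bar{\partial}_{A_{0}})=d_{A_{0}}^{*}d_{A_{0}}$ from Lemma \ref{lem4}. This exhibits the symbol as a positive multiple of $|\xi|^{2}$, conjugated by the positive operator $h$, so \ref{H} is strictly parabolic.

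With parabolicity in hand, I would invoke the standard short-time existence theory for quasilinear parabolic systems --- either a contraction-mapping argument in the parabolic H\"older spaces $C^{2+\alpha,1+\alpha/2}(X\times[0,\epsilon))$ or the inverse function theorem applied to the linearized operator --- to produce, for some $\epsilon>0$, a smooth solution $h(x,t)$ of \ref{H} on $X\times[0,\epsilon)$ with $h(0)=I$. Two structural properties must be checked alongside existence: that the flow preserves self-adjointness of $h$ (which follows because the bracket in \ref{H} is $i$ times a Hermitian quantity, so $h-h^{*}$ solves a linear homogeneous equation with zero initial data), and that $h$ stays positive definite for small $t$, which holds by continuity from $h(0)=I>0$. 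Parabolic bootstrapping then upgrades the solution to $C^{\infty}$.

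Next I would recover the pair $(A,\phi)$. Since $h(t)$ is positive self-adjoint with $h(0)=I$, I can choose a smooth path $g(t)\in\mathcal{G}^{\mathbb{C}}$ with $g(t)^{*}g(t)=h(t)$ and $g(0)=I$, and set $A(t)=g(t)(A_{0})$, $\phi(t)=g(t)(\phi_{0})$ via the action \ref{G1}. Because $(A_{0},\phi_{0})\in\mathcal{H}$, Lemma \ref{lem1} guarantees $(A(t),\phi(t))\in\mathcal{H}$ for all $t\in[0,\epsilon)$, so the K\"ahler identities \ref{Ka2} and Lemmas \ref{lem2}, \ref{lem3}, \ref{lem4} are available along the flow. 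Differentiating the transformation formulas \ref{G2} in $t$ and substituting \ref{H}, I would express $\partial_{t}A$ and $\partial_{t}\phi$ through $h$ and its derivatives, then use Lemma \ref{lem3} to rewrite $i(\partial_{A}-\bar{\partial}_{A})\Lambda F_{A}$ as $d_{A}^{*}F_{A}$ and Lemma \ref{lem2} to convert the $(\bar{\partial}_{A}-\partial_{A})(\phi\otimes\phi^{*})$ contribution into $-2J_{A,\phi}$. This matches the right-hand sides with those of \ref{f2}, up to an infinitesimal unitary gauge transformation arising from the freedom in the choice of $g$.

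The main obstacle is precisely this gauge-matching step: solving \ref{H} pins down the complex-gauge orbit only up to a time-dependent unitary factor, so the reconstructed $(A,\phi)$ a priori solves \ref{f2} only modulo an element of $\mathrm{Lie}(\mathcal{G})$. I would resolve it by exploiting the $\mathcal{G}$-invariance of \ref{f2}: choosing $u(t)\in\mathcal{G}$ to solve the ordinary differential equation $u^{-1}\dot{u}=(\text{the spurious skew-Hermitian term})$ with $u(0)=I$, and replacing $g$ by $ug$, cancels the extra term while keeping $(A,\phi)$ in the same orbit and preserving the initial data $A(0)=A_{0}$, $\phi(0)=\phi_{0}$. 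The resulting $(A,\phi)$ then solves \ref{f2} exactly on $X\times[0,\epsilon)$, its smoothness being inherited from that of $h$, and any uniqueness or continuous-dependence statement needed at this stage descends from the corresponding parabolic theory for \ref{H}.
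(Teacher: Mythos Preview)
Your proposal is correct and follows essentially the same Donaldson--Hong strategy as the paper: solve the strictly parabolic equation \ref{H} for $h$ by standard quasilinear theory, set $g=h^{1/2}$ and $(A,\phi)=g\cdot(A_{0},\phi_{0})$, use Lemmas \ref{lem1}--\ref{lem4} to show this pair satisfies \ref{f2} up to the infinitesimal unitary term $\alpha(t)=\tfrac12(\bar g^{t^{-1}}\partial_t\bar g^{t}-\partial_t g\,g^{-1})$, and then remove $\alpha$ by a time-dependent unitary gauge transformation solving an ODE. Your treatment is, if anything, slightly more explicit than the paper's in justifying parabolicity via the symbol and in noting that self-adjointness and positivity of $h$ are preserved.
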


Recall the complex gauge group ${\mathcal{G}}^{\mathbb{C}}$, which acts on $A\in \mathcal{A}^{1,1}$ with curvature $F_{A}$ of type $(1,1)$ by
\begin{eqnarray*}
\bar{\partial}_{g(A)}=g\cdot \bar{\partial}_{A}\cdot g^{-1}, \quad {\partial}_{g(A)}={g^{*}}^{-1} \cdot {\partial}_{A}\cdot g^{*}
\end{eqnarray*}
where $g^{*}=\bar{g}^{t}$ denotes the conjugate transpose of $g$. Extending the action of the unitary gauge group,
\begin{eqnarray*}
\mathcal{G}=\{g\in {\mathcal{G}}^{\mathbb{C}}|h(g)=g^{*}g=I\},\end{eqnarray*}
which means
\begin{eqnarray*}
g^{-1}\cdot d_{g(A)}\cdot g=\bar{\partial}_{A}+h^{-1}{\partial}_{A}h,\quad
g^{-1}F_{g(A)}g=F_{A}+\bar{\partial}_{A}(h^{-1}{\partial}_{A}h),
\end{eqnarray*}
where $h=g^{*}g$.\par
Consider the following heat equation for a one-parameter family $H=H(t)$ of metrics on the holomorphic vector bundle $E$ over $X$:
\begin{equation}\label{H}
\frac{\partial H}{\partial t}=-2iH[\Lambda{F}_{H}-\frac{i}{2}(\phi\otimes\phi^{*}-\tau I)], \end{equation}
Put $H(t)=H_{0}h$, where $H_{0}$ is the initial Hermitian metric on $E$, then
the equation \ref{H} is completely equivalent to the equation of gauge transformation:
\begin{equation}\label{h1}
\frac{\partial h}{\partial t}=-2ih[\Lambda{F}_{H}-\frac{i}{2}(\phi\otimes\phi^{*}-\tau I)], \end{equation}
with initial value $$h(0)=I$$
where $\Lambda{F}_{H}=\Lambda{F}_{A_{0}}+\Lambda\bar{\partial}_{A_{0}}(h^{-1}\partial_{A_{0}}h)$.\par
We can rewrite this equation as

 \begin{equation}\label{h2}
\frac{\partial h}{\partial t}=-\Delta_{A_{0}}h-i(h\Lambda{F}_{A_{0}}+\Lambda{F}_{A_{0}}h)+2i\Lambda({\bar \partial}_{A_{0}}h h^{-1}{\partial}_{A_{0}})-[\frac{1}{4}[(h+h^{*})\phi_{0}]\otimes[(h+h^{*})\phi_{0}]^{*}-\tau h], \end{equation}
with $h(0)=I$ if $h^{*}=h$, where $\Delta_{A_{0}}=2\bar \partial_{A_{0}}^{*}\partial_{A_{0}}$ is the Laplacian defined through an integrable connection $A_{0}$ $({\bar \partial}^{2}_{A_{0}}=0)$.
As pointed out in \cite{D, J}, (\ref{h2}) is a nonlinear parabolic equation, so we obtain a short time solution to (\ref{h1}) by standard parabolic PDE theory. Therefore we obtain a short-time solution to (\ref{H}).\par
\begin{lemma}\label{local1}
Given a Hermitian $H_{0}$ of class $C^{2, \alpha}$ on $E$, there exists a constant $\varepsilon>0$ with such that the equation (\ref{h1}) (equivalently \ref{H}) exists a solution with initial value $h(0)=I$ exists for $0\leq t<\varepsilon$. Further the solution is of class $C^{2, \alpha}$ and $\frac{\partial H}{\partial t}$ is of class $C^{\alpha}$.
\end{lemma}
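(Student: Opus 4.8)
The plan is to read \ref{h2} as a \emph{semilinear} parabolic equation for the endomorphism-valued unknown $h$ and to solve it by a contraction-mapping argument built on the heat semigroup of the fixed background Laplacian $\Delta_{A_0}$. The decisive structural observation, already visible in the passage from \ref{h1} to \ref{h2}, is that although the flow carries the factor $h$ outside and $h^{-1}$ inside $\Lambda\bar\partial_{A_0}(h^{-1}\partial_{A_0}h)$, these cancel at the level of the highest-order derivatives: the second-order part of $-2ih\Lambda\bar\partial_{A_0}(h^{-1}\partial_{A_0}h)$ is exactly the fixed, uniformly elliptic operator $-\Delta_{A_0}h$ (the curvature contributions appearing as the explicit zeroth-order terms $-i(h\Lambda F_{A_0}+\Lambda F_{A_0}h)$), while the remaining first-order part reduces to the gradient-quadratic term $2i\Lambda(\bar\partial_{A_0}h\,h^{-1}\partial_{A_0}h)$. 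Writing
\begin{equation*}
\frac{\partial h}{\partial t}+\Delta_{A_0}h=N(h,\nabla_{A_0}h),
\end{equation*}
where $N$ collects the curvature terms, the gradient-quadratic term, and the $\phi_0$-potential term, one sees that $N$ depends on $h$ and on $\nabla_{A_0}h$ only to first order and is real-analytic in its arguments wherever $h$ is invertible. This is precisely what makes the standard short-time theory apply.

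Next I would fix the functional setting. Work in the parabolic H\"older class, say $h\in C([0,\varepsilon];\,C^{2,\alpha}(X,\mathrm{End}\,E))$ with $\partial_t h\in C([0,\varepsilon];C^{\alpha})$, and restrict to the ball $B_\delta=\{\,h:\sup_{[0,\varepsilon]}\|h-I\|_{C^{2,\alpha}}\le\delta\,\}$ with $\delta$ small. On $B_\delta$ the unknown stays close to $I$, hence positive-definite and invertible with $\|h^{-1}\|_{C^{2,\alpha}}$ uniformly bounded, so $N$ is well-defined and Lipschitz on $B_\delta$. Using Duhamel's formula I would recast \ref{h2} as the fixed-point equation $h=\Phi(h)$ with
\begin{equation*}
\Phi(h)(t)=e^{-t\Delta_{A_0}}I+\int_0^t e^{-(t-s)\Delta_{A_0}}\,N\bigl(h(s),\nabla_{A_0}h(s)\bigr)\,ds.
\end{equation*}
The smoothing estimates for the analytic heat semigroup, in particular $\|e^{-\sigma\Delta_{A_0}}\|_{C^{1,\alpha}\to C^{2,\alpha}}\lesssim\sigma^{-1/2}$ together with the integrability of $\sigma^{-1/2}$ at $0$, show that $\Phi$ maps $B_\delta$ into itself and is a contraction once $\varepsilon$ is small; the Banach fixed-point theorem then yields a unique $h$ solving \ref{h2} on $[0,\varepsilon)$ with $h(0)=I$.

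It remains to record self-adjointness and the claimed regularity. Self-adjointness $h^*=h$ is preserved because, taking the adjoint of \ref{h2}, the endomorphism $h^*$ solves the identical initial value problem (the right-hand side is built to be self-adjoint when $h$ is), so uniqueness forces $h^*=h$; consequently $H=H_0h$ remains a genuine positive Hermitian metric for short time. Finally, feeding the solution back into \ref{h1} and applying interior parabolic Schauder estimates upgrades the regularity to $h(\cdot,t)\in C^{2,\alpha}$ in the space variables with $\partial_t H\in C^{\alpha}$, exactly as asserted, and when the data are smooth one bootstraps to $C^\infty$.

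The main obstacle I anticipate is the control of the single genuinely nonlinear term $2i\Lambda(\bar\partial_{A_0}h\,h^{-1}\partial_{A_0}h)$: it is quadratic in $\nabla_{A_0}h$ and carries the factor $h^{-1}$, so it sits exactly at the borderline of what the $\sigma^{-1/2}$ smoothing can absorb, and the whole scheme hinges on keeping $h$ inside $B_\delta$ so that $h^{-1}$ stays bounded and positivity is not lost. Verifying the contraction estimate for this term, rather than the formal reduction to a parabolic equation, is where the real work lies; everything else is standard linear parabolic theory on the fixed background $A_0$, which is why the paper can legitimately defer to \cite{D, J}.
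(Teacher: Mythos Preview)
Your proposal is correct, and there is essentially nothing in the paper to compare it against: the paper gives no proof of this lemma, merely observing in the sentence preceding it that \eqref{h2} is a nonlinear parabolic equation and deferring to ``standard parabolic PDE theory'' with citations to \cite{D, J}. Your semigroup/Duhamel contraction argument in parabolic H\"older spaces is exactly one standard way to make that appeal rigorous, so you have simply filled in details the authors deliberately omit.
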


Suppose $h=h(t)$ is a local solution of the equation \ref{h1} on $X\times [0, \varepsilon)$. Take any $g\in \mathcal{G}^{\mathbb{C}}$ such that $\bar{g}^{t}g=h$ (for example $g=h^{1/2}$). Since $h$ solve (\ref{h1}), we have
\begin{equation} \label{G3}
\frac{\partial g}{\partial t}g^{-1}+\bar{g}^{t^{-1}}\frac{\partial \bar{g}^{t}}{\partial t}=-2i[\Lambda F_{g(A_{0})}-\frac{i}{2}(\phi\otimes\phi^{*}-\tau I)].
\end{equation}

Here we use a fact that the moment map $\mu$ is equivariant, i.e., $g\mu(u_{0})g^{-1}=\mu(g(u_{0}))=\mu(u)$.
Then as in \cite{H}, at the time $t$,
\begin{eqnarray*} \frac{\partial A}{\partial t}&=&\frac{\partial}{\partial\epsilon}|_{\epsilon=0}(\partial_{(g+\epsilon\partial_{t}g)A_{0}}+{\bar\partial}_{(g+\epsilon\partial_{t}g)A_{0}})\\
&=&\partial_{A}(\bar{g}^{t^{-1}}\partial_{t}\bar{g}^{t})+{\bar\partial}_{A}(g\partial_{t}g^{-1}).\end{eqnarray*}
So we have
\begin{eqnarray*}
\frac{\partial A}{\partial t}&=&\partial_{A}(\bar{g}^{t^{-1}}\partial_{t}\bar{g}^{t})-{\bar \partial}_{A}(\partial_{t}g g^{-1})\\
&=&-{ \partial}_{A}i[\Lambda F_{A}-\frac{i}{2}(\phi\otimes\phi^{*}-\tau I)]-\frac{1}{2}{\partial}_{A}(\partial_{t}g g^{-1})+\frac{1}{2}{\partial}_{A}(\bar{g}^{t^{-1}}\partial_{t}\bar{g}^{t})+\\
& & {\bar \partial}_{A}i[\Lambda F_{A}-\frac{i}{2}(\phi\otimes\phi^{*}-\tau I)]+\frac{1}{2}{\bar \partial}_{A}(\bar{g}^{t^{-1}}\partial_{t}\bar{g}^{t})-\frac{1}{2}{\bar \partial}_{A}(\partial_{t}g g^{-1}) \\
 & =& i({\bar \partial}_{A}-{\partial}_{A})[\Lambda F_{A}-\frac{i}{2}(\phi\otimes\phi^{*}-\tau I)]+d_{A}(\alpha(t))\\
& =&-i(\partial_{A}-{\bar \partial}_{A})\Lambda F_{A}+i({\bar \partial}_{A}-\partial_{A})[-\frac{i}{2}(\phi\otimes\phi^{*}-\tau I)]+d_{A}(\alpha(t)).\\
 \end{eqnarray*}
Due to lemma \ref{lem2} and lemma \ref{lem3}, we have
 \begin{eqnarray}\label{At}
 \frac{\partial A}{\partial t}&=&-d_{A}^{*}F_{A}-J_{A, \phi}+d_{A}(\alpha(t)),
 \end{eqnarray}
 where $\alpha(t)$ is defined by $\alpha(t)=\frac{1}{2}(\bar{g}^{t^{-1}}\partial_{t}\bar{g}^{t}-\partial_{t}g g^{-1})$.\\

 Applying \ref{G3} we obtain
 \begin{eqnarray*}
\frac{\partial \phi}{\partial t}& = & \partial_{t}g g^{-1}\phi\\
 & =& -i[\Lambda F_{A}-\frac{i}{2}(\phi\otimes\phi^{*}-\tau I)]\phi-\frac{1}{2}(\bar{g}^{t^{-1}}\partial_{t}\bar{g}^{t}-\partial_{t}g g^{-1})\phi\\
 & =&-i\Lambda F_{A}\phi+\frac{1}{2}\phi(\tau-|\phi|^{2})-\alpha(t)\phi.
\end{eqnarray*}
By lemma \ref{lem1} and lemma \ref{lem4}, we have
\begin{eqnarray} \label{fit}
\frac{\partial \phi}{\partial t}& =&-d_{A}^{*}d_{A}\phi+\frac{1}{2}\phi(\tau-|\phi|^{2})-\alpha(t)\phi.
\end{eqnarray}

Assume $h$ is the solution of \ref{h2}, let $g=h^{\frac{1}{2}}$. The corresponding pair $(\tilde{A}(t), \tilde{u}(t))=(g(A_{0}), g(u_{0}))$ thus is a solution to \ref{At} and \ref{fit}. \par
At last we can prove the local existence of the gradient flow equations \ref{f2} through a gauge transformation of the equivalent flow \ref{At} and \ref{fit}.
Let $S(t)$ be the unique smooth solution to the following initial value problem:
\begin{eqnarray}\label{St}
\frac{d S}{dt}\cdot S^{-1}=- \alpha(t), \quad S(0)=I,
 \end{eqnarray}
where \begin{eqnarray*}
\alpha(t)=\frac{1}{2}(\bar{g}^{t^{-1}}\partial_{t}\bar{g}^{t}-\partial_{t}g g^{-1}). \end{eqnarray*}
Let \begin{eqnarray*}d_{A}=S^{-1}\cdot d_{\tilde{A}}\cdot S, \quad \phi=S^{-1}\tilde{\phi}, \end{eqnarray*}
we have \begin{eqnarray*}
S^{-1}\cdot{d^{*}_{\tilde A}F_{\tilde A}}\cdot S&=&d^{*}_{A}F_{A}; \\
S^{-1}\cdot (\tilde{d}_{A}\tilde{\phi}\otimes {\tilde{\phi}}^{*}-{\tilde{\phi}}\otimes (\tilde{d}_{A}\tilde{\phi})^{*})\cdot S&=& d_{A}\phi\otimes \phi^{*}-\phi\otimes(d_{A}\phi)^{*},
\end{eqnarray*}
and \begin{eqnarray*}
d_{\tilde A}(\alpha)=d_{\tilde A}\cdot \alpha-\alpha\cdot d_{\tilde A}.\end{eqnarray*}
Combining these equalities with \ref{St}, we have the following local existence theorem.
\begin{theorem}\label{loc2}
For any initial condition $(A_{0}, \phi_{0})\in \mathcal{H}$, which means that $A_{0}\in \mathcal{A}^{1,1}$ is a given smooth connection on $E$ with curvature $F_{A_{0}}$ of type $(1,1)$ and $\phi_{0}$ is a holomorphic section of $E$, i.e. ${\bar \partial}_{A_{0}}\phi_{0}=0$. Then there exists a positive constant $\epsilon >0$ such that, the gradient flow equations \ref{f2}
\begin{eqnarray*}
\frac{\partial A}{\partial t}&=&-d^{*}_{A}F_{A}-J_{A,\phi};\\
\frac{\partial \phi}{\partial t}&=&-d^{*}_{A}d_{A}\phi+\frac{1}{2}\phi(\tau-|\phi|^{2}).
\end{eqnarray*}
of the vortex functional \ref{vort} have a local smooth solution on $X \times [0, \epsilon)$.
\end{theorem}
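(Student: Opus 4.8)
The plan is to exploit the gauge invariance of the Yang--Mills--Higgs functional to replace the system \ref{f2}, which is only weakly parabolic as written, by a genuinely parabolic equation for a flow of Hermitian metrics, in the spirit of Donaldson's proof of the Narasimhan--Seshadri theorem. Because \ref{f2} is invariant under the infinite-dimensional unitary gauge group $\mathcal{G}$, its symbol is degenerate and one cannot appeal directly to parabolic existence theory. The device that resolves this is to solve instead the evolution equation \ref{h1} for a one-parameter family $h(t)$ of gauge transformations (equivalently the metric flow \ref{H}), which upon rewriting takes the strictly parabolic form \ref{h2}. By Lemma \ref{local1}, standard quasilinear parabolic theory then yields a short-time solution $h(t)$ with $h(0)=I$, $h^{*}=h$, on some interval $[0,\varepsilon)$, of class $C^{2,\alpha}$ with $\partial_t h$ of class $C^{\alpha}$.

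The next step is to transfer this into a solution of a gauge-modified version of \ref{f2}. First I would set $g=h^{1/2}$, the positive Hermitian square root, which is well defined and inherits the regularity of $h$, and form the pair $(\tilde A(t),\tilde\phi(t))=(g(A_0),g(\phi_0))$. Lemma \ref{lem1} guarantees that this pair remains in $\mathcal{H}$ for every $t$, so that the K\"ahler identities \ref{Ka1}, \ref{Ka2} and their consequences recorded in Lemmas \ref{lem2}, \ref{lem3}, \ref{lem4} are available along the flow. Differentiating the action formulas \ref{G1} in $t$ and using that $h$ solves \ref{h1} (in the differentiated form \ref{G3}, together with the equivariance of the moment map) produces precisely the evolution equations \ref{At}, \ref{fit}:
\begin{equation*}
\frac{\partial \tilde A}{\partial t}=-d_{\tilde A}^{*}F_{\tilde A}-J_{\tilde A,\tilde\phi}+d_{\tilde A}(\alpha(t)),\qquad
\frac{\partial \tilde\phi}{\partial t}=-d_{\tilde A}^{*}d_{\tilde A}\tilde\phi+\tfrac{1}{2}\tilde\phi(\tau-|\tilde\phi|^{2})-\alpha(t)\tilde\phi,
\end{equation*}
where $\alpha(t)=\tfrac{1}{2}(\bar{g}^{t^{-1}}\partial_t\bar{g}^{t}-\partial_t g\,g^{-1})$. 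Here Lemma \ref{lem3} converts $-d_{\tilde A}^{*}F_{\tilde A}$ into the first-order expression $i(\partial_{\tilde A}-\bar\partial_{\tilde A})\Lambda F_{\tilde A}$, Lemma \ref{lem2} identifies the $(\bar\partial_{\tilde A}-\partial_{\tilde A})(\tilde\phi\otimes\tilde\phi^{*}-\tau I)$ contribution with $-2J_{\tilde A,\tilde\phi}$, and Lemma \ref{lem4} turns $-i\Lambda F_{\tilde A}\tilde\phi+\tfrac{1}{2}\tilde\phi(\tau-|\tilde\phi|^{2})$ into $-d_{\tilde A}^{*}d_{\tilde A}\tilde\phi+\tfrac{1}{2}\tilde\phi(\tau-|\tilde\phi|^{2})$.

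The last step is to gauge away the extra terms $d_{\tilde A}\alpha$ and $-\alpha\tilde\phi$, which have exactly the shape of an infinitesimal gauge transformation generated by $\alpha(t)$. To do this I would solve the linear ODE \ref{St}, namely $\dot S\,S^{-1}=-\alpha(t)$ with $S(0)=I$; since $\alpha(t)$ is continuous in $t$ by the time regularity of $h$ and hence of $g$, this has a unique smooth solution on $[0,\varepsilon)$ valued in $\mathcal{G}^{\mathbb{C}}$. Setting $d_A=S^{-1}d_{\tilde A}S$ and $\phi=S^{-1}\tilde\phi$ and invoking the conjugation identities listed before the statement, that is $S^{-1}(d_{\tilde A}^{*}F_{\tilde A})S=d_A^{*}F_A$, the matching rule for the $J$-term, and $d_{\tilde A}(\alpha)=d_{\tilde A}\alpha-\alpha\, d_{\tilde A}$, the gauge corrections cancel and $(A,\phi)$ solves \ref{f2} on $X\times[0,\varepsilon)$; a parabolic bootstrap finally promotes the solution to smoothness.

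I expect the main obstacle to lie in the middle step: verifying cleanly that $(g(A_0),g(\phi_0))$ satisfies the modified flow \ref{At}, \ref{fit}, that is, that the time derivative of the complex gauge action, combined with the K\"ahler identities and the moment-map equivariance, collapses exactly to $-d_{\tilde A}^{*}F_{\tilde A}-J_{\tilde A,\tilde\phi}$ plus the pure-gauge correction, leaving no residual terms. The algebra relating $\partial_t g$, $\partial_t\bar{g}^{t}$ and $h$ must be tracked carefully, and the decomposition into $\partial_{\tilde A}$ and $\bar\partial_{\tilde A}$ pieces must match the gradient precisely. The preservation of $\mathcal{H}$ furnished by Lemma \ref{lem1} is exactly what makes this work, since the reductions of $d_{\tilde A}^{*}F_{\tilde A}$ and $d_{\tilde A}^{*}d_{\tilde A}$ to the displayed first-order forms are valid only on $\mathcal{H}$.
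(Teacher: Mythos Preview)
Your proposal is correct and follows essentially the same route as the paper: obtain a short-time solution of the metric flow \ref{h1}/\ref{h2} via Lemma \ref{local1}, set $g=h^{1/2}$ and $(\tilde A,\tilde\phi)=(g(A_0),g(\phi_0))$, use Lemmas \ref{lem1}--\ref{lem4} together with \ref{G3} to derive the gauge-modified flow \ref{At}, \ref{fit}, and then remove the $\alpha(t)$ terms by the unitary gauge transformation $S(t)$ from \ref{St}. The only minor addition is your explicit mention of a parabolic bootstrap at the end, which the paper leaves implicit.
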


\subsection{Global existence}
Let $H=H_{0}h$ and $h=\bar{g}^{t}g$, and put
\begin{eqnarray*}
\hat{F}_{H}:=\Lambda F_{A_{0}}+\Lambda\bar{\partial}_{H_{0}}(h^{-1}\partial_{H_{0}}h),
\end{eqnarray*}
Then equation \ref{h2} can be rewritten as the following
\begin{eqnarray}\label{h3}
\frac{\partial h}{\partial t}=-2i\bar{g}^{t}\big[\Lambda F_{A}-\frac{i}{2}(\phi\otimes\phi^{*}-\tau I)\big]g,
\end{eqnarray}
with initial value $h(0)=I$, where $\bar{g}^{t}g=h$, $A=g(A_{0})$, $\phi=g(\phi_{0})$.\par
By lemma \ref{cor1} and the Gr$\ddot{\textmd{o}}$nwall's inequality, we can prove the following $C^{0}$ convergence result totally similar to the proof of Lemma 8 in \cite{H}.
\begin{lemma} \label{lem8}
Let $H(t)=H_{0}h(t)$ where $h(t)$ is smooth solution of (\ref{h3}) in $X\times [0, T)$ with the initial value $H_{0}$ for a finite time $T>0$. Then $H(t)$ converges in $C^{0}$ to a nondegenerate continuous metric $H(T)$ as $t\rightarrow T$.
\end{lemma}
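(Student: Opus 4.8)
The plan is to turn the boundedness information already assembled in Section 2 into a uniform bound on the \emph{velocity} $\partial_t h$ of the flow \eqref{h3}, and then to read off both $C^0$ convergence and nondegeneracy of the limit by a Gronwall bound together with a Cauchy estimate in time. The only analytic input needed is that the moment-map term stays uniformly bounded: by Corollary \ref{cor1} the density $\hat e=|\Lambda F_A-\frac{i}{2}(\phi\otimes\phi^*-\tau I)|^2$ is nonincreasing in $t$, so setting $C_0:=(\sup_X\hat e|_{t=0})^{1/2}$, which depends only on $(A_0,\phi_0)$, we have $\sup_X|\Lambda F_A-\frac{i}{2}(\phi\otimes\phi^*-\tau I)|\le C_0$ for all $t\in[0,T)$ (Lemmas \ref{lem5} and \ref{lem6} supply the companion bounds on $|\phi|$ and $|\Lambda F_A|$ that make $C_0$ finite to begin with). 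Rewriting \eqref{h3} through equivariance of the moment map in the form of the metric evolution \eqref{h1}, one has $h^{-1}\partial_t h=-2i\,\mu_H$ with $\mu_H=\Lambda F_H-\frac{i}{2}(\phi\otimes\phi^*-\tau I)$, whose pointwise norm coincides with $\sqrt{\hat e}$; hence pointwise on $X$ and for all $t\in[0,T)$,
\[
\big|\,h^{-1}\partial_t h\,\big|=2|\mu_H|\le 2C_0 .
\]

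Next I would run Gronwall's inequality to bound $h$ and $h^{-1}$ uniformly. Viewing $h$ as a positive Hermitian endomorphism with $\partial_t h=-2i\,h\,\mu_H$, the pointwise operator norm is Lipschitz in $t$ and satisfies $\frac{d}{dt}|h|\le|\partial_t h|\le 2C_0|h|$, so $|h(t)|\le e^{2C_0t}\le e^{2C_0T}$; differentiating the inverse, $\partial_t(h^{-1})=-h^{-1}(\partial_t h)h^{-1}=2i\,\mu_H\,h^{-1}$ gives in the same way $|h^{-1}(t)|\le e^{2C_0T}$. Since $T<\infty$ both bounds are finite and \emph{uniform} in $t$, so the eigenvalues of $h(t)$ remain trapped in the fixed compact interval $[e^{-2C_0T},e^{2C_0T}]$ for every $t\in[0,T)$; in particular $h(t)$ stays nondegenerate with a uniform lower bound on its smallest eigenvalue.

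With $|h|$ uniformly bounded, the velocity is itself uniformly bounded, $|\partial_t h|=2|h\mu_H|\le 2C_0e^{2C_0T}=:C_1$, so that for all $0\le t_1<t_2<T$,
\[
|h(t_2)-h(t_1)|\le\int_{t_1}^{t_2}|\partial_t h|\,dt\le C_1(t_2-t_1),
\]
uniformly on $X$. Because $T$ is finite, this shows $\{h(t)\}$ is uniformly Cauchy as $t\to T$, hence converges in $C^0$ to a continuous Hermitian section $h(T)$; passing to the limit in the eigenvalue bounds preserves $e^{-2C_0T}\le h(T)\le e^{2C_0T}$, so $h(T)$ is nondegenerate and $H(T)=H_0h(T)$ is the claimed nondegenerate continuous metric.

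The step I expect to be the real obstacle is the first one: converting the bound on $\hat e$, which is a norm taken in the fixed background data of the flow \eqref{f2}, into a genuine pointwise bound on the endomorphism appearing on the right of \eqref{h3}, uniformly over $X$ and $t$. This hinges on the equivariance identity $g\,\mu(A_0,\phi_0)\,g^{-1}=\mu(A,\phi)$ used already in deriving \eqref{G3}, together with the conjugation invariance of the pointwise norm, which is exactly what identifies $|\mu_H|$ with $\sqrt{\hat e}$ and yields $|h^{-1}\partial_t h|\le 2C_0$. A secondary point requiring care is that $h$ and $\partial_t h$ need not commute, so the eigenvalue bounds above must be obtained from the operator-norm differential inequalities (or, equivalently, from the nonpositively curved geodesic distance on $GL(n,\mathbb{C})/U(n)$) rather than by naive diagonalization; once the velocity bound is secured, these remaining steps are routine, matching the structure of Lemma 8 in \cite{H}.
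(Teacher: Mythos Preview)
Your proposal is correct and follows essentially the same approach as the paper: the paper's proof is the one-line remark that the result follows from Corollary~\ref{cor1} together with Gr\"onwall's inequality, exactly as in Lemma~8 of \cite{H}, and your argument is a faithful elaboration of that strategy. Your flagged ``real obstacle'' is indeed the only delicate point; it is cleanly resolved by noting that since $-2i\mu_H$ is $H$-Hermitian with eigenvalues in $[-2C_0,2C_0]$, one has the operator inequality $-2C_0\,h\le \partial_t h\le 2C_0\,h$ as $H_0$-Hermitian forms, which immediately yields both the eigenvalue bounds on $h$ and the uniform bound on $|\partial_t h|$ without any appeal to conjugation-invariance of a fixed norm.
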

Similar to Lemma 9 in \cite{H}, we have
\begin{lemma}\label{uniqueness}  Any two smooth solutions $H=H_{0}h, K=H_{0}k$ of the equation \ref{h3}, which are defined for $0\leq t< \epsilon$, are continuous at $t=0$, and we have the same initial condition $H_{0}=K_{0}$, agree for all $t\in [0, \epsilon)$.
\end{lemma}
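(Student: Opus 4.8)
The plan is to measure the discrepancy between the two solutions by a single nonnegative function on $X$ and to show that it is a subsolution of the heat equation, so that the maximum principle of Corollary \ref{cor1} together with the matching initial data forces it to vanish identically. Passing from the gauge transformations to the metrics, I would set $s = s(t) = H(t)^{-1}K(t) \in \Omega^{0}(X, \textmd{End}\,E)$. Since $H$ and $K$ are Hermitian and positive, $s$ is self-adjoint and positive-definite with respect to $H$, so its eigenvalues are positive reals; consequently
$$\sigma(t,x) := \textmd{tr}(s) + \textmd{tr}(s^{-1}) - 2n$$
satisfies $\sigma \geq 0$ pointwise, with $\sigma = 0$ if and only if $s = I$, i.e. $H = K$. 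The hypotheses that both solutions are continuous at $t=0$ and share the initial metric $H_{0} = K_{0}$ give $\sigma(0, \cdot) \equiv 0$.

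The heart of the proof is the evolution of $\sigma$. Differentiating $s = H^{-1}K$ in time and substituting the flow \ref{h3} in its metric form $H^{-1}\partial_{t}H = -2i(\Lambda F_{H} - \tfrac{i}{2}(\phi_{H}\otimes\phi_{H}^{*}-\tau I))$, and likewise for $K$, yields after using the cyclicity of the trace
$$\frac{\partial}{\partial t}\textmd{tr}(s) = 2i\,\textmd{tr}\big(s(M_{H}-M_{K})\big), \qquad M_{H} := \Lambda F_{H} - \tfrac{i}{2}(\phi_{H}\otimes\phi_{H}^{*}-\tau I),$$
and an analogous identity for $\textmd{tr}(s^{-1})$. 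For the spatial part I would invoke Donaldson's identity $\Lambda F_{K}-\Lambda F_{H} = \Lambda\bar{\partial}_{A_{0}}(s^{-1}\partial_{H}s)$ relating the two Chern curvatures, which is precisely the term generated when the Laplacian $\Delta = \nabla_{A}^{*}\nabla_{A}$ acts on $\textmd{tr}(s)$. The purely curvature contributions then cancel between $\tfrac{\partial}{\partial t}\sigma$ and $\Delta\sigma$ in the classical Donaldson--Simpson manner, leaving a nonpositive gradient term of the form $-|\nabla_{A}s|^{2}$ together with lower-order terms involving $\Lambda F_{A}$ and $\phi_{H}\otimes\phi_{H}^{*}-\phi_{K}\otimes\phi_{K}^{*}$. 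Using the uniform bounds $|\phi|\leq C$ and $|\Lambda F_{A}|\leq C$ from Lemma \ref{lem5} and Lemma \ref{lem6}, valid on the common interval of existence, these remainders are dominated pointwise by $C\sigma$, so that
$$\Big(\frac{\partial}{\partial t} + \Delta\Big)\sigma \leq C\sigma \qquad \textmd{on } X\times(0,\epsilon).$$

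Finally I would apply the maximum principle to $\sup_{X}\sigma(t,\cdot)$ exactly as in Corollary \ref{cor1}: at an interior spatial maximum $\Delta\sigma \geq 0$, hence $\tfrac{d}{dt}\sup_{X}\sigma \leq C\sup_{X}\sigma$, and Gr$\ddot{\textmd{o}}$nwall's inequality, already exploited in Lemma \ref{lem8}, together with $\sup_{X}\sigma(0,\cdot)=0$ forces $\sigma \equiv 0$ on $[0,\epsilon)$; thus $s\equiv I$ and $H\equiv K$, which is the asserted uniqueness.

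I expect the main obstacle to be twofold. The genuinely new point relative to the pure Hermitian--Einstein flow of \cite{H} is the Higgs term: one must verify that $\textmd{tr}(s(\phi_{H}\otimes\phi_{H}^{*}-\phi_{K}\otimes\phi_{K}^{*}))$ and its $s^{-1}$ counterpart, where $\phi_{H}=g_{H}(\phi_{0})$ and $\phi_{K}=g_{K}(\phi_{0})$ are governed by the same complex gauge action that produces $s$, are indeed controlled by $C\sigma$ — this is exactly where the a priori $C^{0}$ estimates of Lemmas \ref{lem5} and \ref{lem6} become essential. The second, more technical, difficulty is the limited regularity at $t=0$: by Lemma \ref{local1} the solutions are only continuous there, so the maximum principle should be run on $[\delta,\epsilon)$ and the limit $\delta\to 0$ taken using the continuity of $\sigma$ up to $t=0$.
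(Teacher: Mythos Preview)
Your argument is sound, but it takes a genuinely different route from the paper. The paper works directly with the difference $h-k$ of the two gauge transformations in equation \ref{h2}: one writes down the evolution of $h-k$, multiplies by $h-k$, integrates over $X$, and after integration by parts obtains an inequality of the form $f(\tau)\leq C\tau f(\tau)$ for $f(\tau)=\sup_{0\leq t\leq\tau}\int_{X}|h-k|^{2}$, which forces $f\equiv 0$ for small $\tau$. This is an $L^{2}$ energy method, entirely in the spirit of Hong \cite{H}. Your proposal instead follows Donaldson's original uniqueness argument via the pointwise subsolution $\sigma=\textmd{tr}(s)+\textmd{tr}(s^{-1})-2n$ and the parabolic maximum principle. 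Both are classical and both work here; the $L^{2}$ method is slightly more elementary and avoids the delicate bookkeeping of the Donaldson curvature identity, while your approach gives pointwise (rather than integral) control and connects more directly with the maximum-principle machinery of Corollary \ref{cor1}.

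One small correction: the crucial step where the Higgs remainder is bounded by $C\sigma$ does hold, but the right justification is not Lemmas \ref{lem5} and \ref{lem6} (those concern solutions of the flow \ref{f2}, and invoking them here is mildly circular). What you actually need is that $s=h^{-1}k$ and $s^{-1}$ are bounded on each compact subinterval $[0,T']\subset[0,\epsilon)$, which follows simply from the assumed smoothness of $h,k$ on $(0,\epsilon)$ together with continuity at $t=0$ and $h(0)=k(0)=I$. With $s$ bounded, the factor $(s-s^{-1})$ times the Higgs difference $\phi_{H}\otimes\phi_{H}^{*}-\phi_{K}\otimes\phi_{K}^{*}$ (which is itself of order $s-I$) is indeed quadratic in $s-I$ and hence controlled by $C\sigma$, as you claim.
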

\begin{proof}
By the equation \ref{h2}, we have
\begin{eqnarray*}
\frac{\partial(h-k)}{\partial t}&=&-\Delta_{A_{0}}(h-k)-2i(h-k)\Lambda F_{A_{0}}\\
&&+2i\Lambda \big[{\bar \partial}_{A_{0}}hh^{-1}{\partial}_{A_{0}}h-{\bar \partial}_{A_{0}}kk^{-1}{\partial}_{A_{0}}k\big]\\
&&-(h\phi_{0}\otimes\phi_{0}^{*}h-k\phi_{0}\otimes\phi_{0}^{*}k)+\tau(h-k),
\end{eqnarray*}
where $\Delta_{A_{0}}=2\partial_{A_{0}}^{*}\partial_{A_{0}}$. \par
Then we can multiply both sides with $h-k$ and integrate by parts (see \cite{H} for details) to show that
$$f(\tau)\leq C\tau f(\tau),$$
where $f(\tau):=\sup_{0\leq t\leq\tau}\int_{X}|h-k|^{2}dX$. Choosing $C\tau<1$ we obtain $f(\tau)=0$. This means that $h(t)=k(t)$ for all $t\leq\tau$. $\hfill\Box$
\end{proof}

Using almost the same procedure as in \cite{D, H}, we can further prove
\begin{lemma} \label{lem9}
Let $H(t), 0\leq t< T$, be any one-parameter family of Hermitian metrics on a holomorphic vector bundle $P$ over the compact K$\ddot{\textmd{a}}$hler manifold $X$ such that\\
(i) $H(t)$ converges in $C^{0}$ to some continuous metric $H(T)$ as $t\rightarrow T$,\\
(ii) ${sup}_{X}|{\hat{F}}_{H}|$ is uniformly bounded for $t< T$.\\
Then $H(t)$ is bounded in $C^{1,\alpha}$ independently of $t\in[0, T)$ for $0<\alpha<1$.
\end{lemma}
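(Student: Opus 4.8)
The plan is to treat the time parameter as inert: at each fixed $t\in[0,T)$ we view $h=h(t)$, where $H=H_0h$, as a solution of a second order elliptic equation on $X$, and extract a $C^{1,\alpha}$ bound whose constant depends only on the quantities bounded in (i) and (ii) together with the fixed background data $A_0,H_0,\omega$. Since these input bounds are uniform in $t$, so is the resulting estimate. Hypothesis (i) forces the limit $H(T)$ to be nondegenerate and continuous, so that $\sup_X(|h|+|h^{-1}|)\le C$ uniformly in $t$; hypothesis (ii) gives $\|\hat F_H\|_{L^\infty}\le C$ uniformly. Using the definition of $\hat F_H$ from the beginning of this subsection, set
\begin{equation*}
\eta:=\hat F_H-\Lambda F_{A_0}=\Lambda\bar\partial_{A_0}(h^{-1}\partial_{A_0}h),\qquad \|\eta\|_{L^\infty}\le C .
\end{equation*}

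First I would expose the quasilinear structure of this relation. Applying $\bar\partial_{A_0}(h^{-1})=-h^{-1}(\bar\partial_{A_0}h)h^{-1}$ and multiplying on the left by $h$ gives
\begin{equation*}
i\Lambda\bar\partial_{A_0}\partial_{A_0}h=h\,\eta+i\Lambda\big(\bar\partial_{A_0}h\cdot h^{-1}\partial_{A_0}h\big).
\end{equation*}
Since $i\Lambda\bar\partial_{A_0}\partial_{A_0}$ equals, up to a fixed constant, the rough Laplacian $\Delta_{A_0}=\nabla_{A_0}^*\nabla_{A_0}$, this is a quasilinear elliptic equation of the schematic form $\Delta_{A_0}h=c\,h\eta+c\,Q(\partial_{A_0}h,\partial_{A_0}h)$, where $Q$ is quadratic in the first derivatives with coefficients controlled by $|h|+|h^{-1}|$. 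The leading operator is the ordinary Laplacian, so the entire difficulty is concentrated in the critical gradient-quadratic term $Q$.

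The main obstacle is an a priori control of $\partial_{A_0}h$; without it the term $Q$ cannot be absorbed and elliptic regularity cannot be started. The key observation is that after taking the trace the quadratic term acquires a favorable definite sign: since $h$ is positive self-adjoint and $\partial_{A_0}h=(\bar\partial_{A_0}h)^*$, the scalar $u:=\operatorname{tr}(h)$ satisfies
\begin{equation*}
\Delta_{A_0}u+c'\,|\partial_{A_0}h|^2_{h^{-1}}=c\,\operatorname{tr}(h\,\eta),\qquad c'>0.
\end{equation*}
Integrating over the compact manifold $X$ annihilates the Laplacian term, while the right side is bounded by $\|h\|_{L^\infty}\|\eta\|_{L^\infty}\operatorname{vol}(X)\le C$; since $h^{-1}$ is bounded below, this yields the uniform bound $\|\partial_{A_0}h\|_{L^2}\le C$, i.e. a uniform $L^2_1$ bound on $h$. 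This integration step, which exploits the compactness of $X$ together with the sign, is the heart of the matter.

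Finally I would bootstrap. With $h\in L^\infty\cap L^2_1$ and $\eta\in L^\infty$, the equation is exactly of the form treated in the regularity theory of the Hermitian--Einstein equation, and one upgrades the integrability of $\partial_{A_0}h$ by a Moser iteration applied to $\operatorname{tr}(h)+\operatorname{tr}(h^{-1})$, exactly as in \cite{D,H}, obtaining $\|\partial_{A_0}h\|_{L^{2p}}\le C(p)$ for every finite $p$. Then $Q(\partial_{A_0}h,\partial_{A_0}h)\in L^p$ and $h\eta\in L^\infty$, so the global Calder\'on--Zygmund $L^p$ estimate for $\Delta_{A_0}$ gives a uniform bound $\|h\|_{L^p_2}\le C(p)$. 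Choosing $p>2n=\dim_{\mathbb R}X$ and invoking the Sobolev embedding $L^p_2\hookrightarrow C^{1,\alpha}$ with $\alpha=1-2n/p$ produces the desired uniform $C^{1,\alpha}$ bound on $h$, hence on $H(t)=H_0h(t)$, independent of $t\in[0,T)$, since every constant above depends only on the data in (i), (ii) and on $(A_0,H_0,\omega)$.
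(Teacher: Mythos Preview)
Your proposal is correct and follows precisely the standard route the paper defers to: the paper gives no details for this lemma, merely stating that it follows ``using almost the same procedure as in \cite{D, H}'', and your sketch---recasting $\hat F_H=\Lambda F_{A_0}+\Lambda\bar\partial_{A_0}(h^{-1}\partial_{A_0}h)$ as a quasilinear elliptic equation for $h$, exploiting the sign in the trace to get the initial $L^2_1$ bound, and then bootstrapping via $L^p$ elliptic estimates and Sobolev embedding---is exactly the Donaldson/Hong argument being cited. There is nothing to add.
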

Now we can prove the global existence of (\ref{H}) as follows\\
\begin{theorem} \label{Global}
 For any initial condition $(A_{0}, \phi_{0})\in \mathcal{H}$, which means that $A_{0}\in \mathcal{A}^{1,1}$ is a given smooth connection on $E$ with curvature $F_{A_{0}}$ of type $(1,1)$ and $\phi_{0}$ is a holomorphic section of $E$, i.e. ${\bar \partial}_{A_{0}}\phi_{0}=0$, and let $H_{0}$ be a Hermitian metric on $E$. Then the equation
\begin{eqnarray*}
\frac{\partial H}{\partial t}=-2iH\big[\Lambda F_{H}-\frac{i}{2}(\phi\otimes\phi^{*}-\tau I)\big]
\end{eqnarray*}
has a unique solution $H(t)$ which exists on $X$ for $0\leq t<\infty$.
\end{theorem}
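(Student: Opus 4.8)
The plan is to run the standard maximal-time (open--closed) argument of Donaldson and Hong. By the short-time existence result (Lemma \ref{local1}), a smooth solution $H(t)=H_{0}h(t)$ with $h(0)=I$ exists on some interval $[0,\epsilon)$. Let $T$ be the supremum of all times up to which a smooth solution exists; local existence gives $T>0$, and I will show $T=\infty$ by assuming $T<\infty$ and deriving a contradiction.

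First I would collect the a priori estimates already established. Lemma \ref{lem8} shows that $H(t)$ converges in $C^{0}$ to a \emph{nondegenerate} continuous metric $H(T)$ as $t\to T$, which is exactly hypothesis (i) of Lemma \ref{lem9}. Lemma \ref{lem6}, itself a consequence of Corollary \ref{cor1} and Lemma \ref{lem5}, gives that $\sup_{X}|\hat F_{H}|$ is uniformly bounded on $[0,T)$, supplying hypothesis (ii). Applying Lemma \ref{lem9} then produces a uniform $C^{1,\alpha}$ bound on $H(t)$ independent of $t\in[0,T)$.

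The crux is to bootstrap from $C^{1,\alpha}$ up to $C^{\infty}$. With the $C^{1,\alpha}$ bound in hand I would treat the flow equation \ref{h2} as a quasilinear parabolic equation for $h$ whose coefficients are now controlled in $C^{\alpha}$; interior parabolic Schauder estimates then upgrade the bound to $C^{2,\alpha}$, and differentiating the equation and iterating yields uniform $C^{k,\alpha}$ bounds for every $k$. Consequently $H(t)\to H(T)$ in $C^{\infty}$ and $H(T)$ is a smooth, nondegenerate Hermitian metric. I expect this bootstrap to be the main technical obstacle --- in particular, checking that the nonlinear terms $\Lambda\bar\partial_{A_{0}}(h^{-1}\partial_{A_{0}}h)$ and $[(h+h^{*})\phi_{0}]\otimes[(h+h^{*})\phi_{0}]^{*}$ remain in the appropriate H\"older class under the a priori bounds, so that the Schauder machinery genuinely applies and the limit datum is smooth rather than merely $C^{1,\alpha}$.

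Finally, I would restart the flow from the smooth datum $H(T)$ via Lemma \ref{local1}, obtaining a smooth solution on $[T,T+\delta)$. Gluing this to the original solution and invoking the uniqueness statement (Lemma \ref{uniqueness}) on the overlap yields a smooth solution on $[0,T+\delta)$, contradicting the maximality of $T$. Hence $T=\infty$, and uniqueness of the resulting global solution follows by applying Lemma \ref{uniqueness} on each finite subinterval.
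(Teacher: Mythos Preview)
Your proposal is correct and follows essentially the same route as the paper: establish short-time existence and uniqueness (Lemmas \ref{local1}, \ref{uniqueness}), use Lemmas \ref{lem6} and \ref{lem8} to verify the hypotheses of Lemma \ref{lem9}, obtain the $C^{1,\alpha}$ bound, bootstrap via parabolic Schauder theory (the paper simply cites Theorem 11 of \cite{H} for this step), and then restart the flow at $H(T)$ to contradict maximality. Your write-up is in fact a bit more explicit about the bootstrap and the gluing/uniqueness at $T$ than the paper's own proof.
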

\begin{proof} The local existence and uniqueness of this equation has been proved by lemma \ref{local1} and lemma \ref{uniqueness}. Suppose that the solution exists for $0\leq t< T$. By
 lemma \ref{lem6}, $\sup_{X}|\hat{F}|$ is uniformly bounded for $t\in [0, T)$. By lemma \ref{lem8}, $H(t)$ converges in $C^{0}$ to a nondegenerate continuous limit metric $H(T)$ as $t\rightarrow T$. Thus by lemma \ref{lem9}, $H(t)$ is bounded in $C^{1,\alpha}$ independently of $t\in [0, T)$.
  Let $H(t)=H_{0}h(t)$, where $h(t)$ solves
 \begin{eqnarray*}\frac{\partial h}{\partial t}=-\Delta_{A_{0}}h-2ih\Lambda{F}_{A_{0}}+2i\Lambda({\bar \partial}_{A_{0}}h h^{-1}{\partial}_{A_{0}}h)-(h\phi_{0}\otimes\phi_{0}^{*}h-\tau h). \end{eqnarray*}
Then it is the same as the proof of Theorem 11 in \cite{H} to prove that $h$ is $C^{2,\alpha}$ and $\frac{\partial h}{\partial t}$ is $C^{\alpha}$ with bounds independent of $t\in [0, T)$. Thus $H(t)\rightarrow H(T)$ in $C^{2,\alpha}$, hence in $C^{\infty}$. For the initial value $H(T)$ we use local existence theorem (\ref{local1}) again. Therefore the solution continues to exist for $t<T+\epsilon$, for some $\epsilon$. This completes the proof. $\hfill\Box$
\end{proof}

Theorem \ref{Com} follows from Theorem \ref{loc2} and Theorem \ref{Global}. Then by Theorem \ref{equiv}, we ultimately complete the proof of Theorem \ref{Exi}.

College of Science, National University of Defense Technology, Changsha 410073, Hunan, P. R. China\\
Email address: linaijin@nudt.edu.cn\\
Department of Mathematics, The University of British Columbia, Vancouver, B.C., Canada V6T 1Z2\\
Email address: lmshen@math.ubc.ca
\end{document}